\numberwithin{equation}{section}
\newtheorem{prop}{Proposition}
\newtheorem{lemma}[prop]{Lemma}
\newtheorem{thm}[prop]{Theorem}
\newtheorem{cor}[prop]{Corollary}
\newtheorem{conj}[prop]{Conjecture}
\numberwithin{prop}{section}
\theoremstyle{definition}
\newtheorem{defn}[prop]{Definition}
\newtheorem{rmk}[prop]{Remark}
\newcommand{\del}{\partial}
\newcommand{\dt}{\frac{\partial}{\partial t}}
\newcommand{\brs}[1]{\left| #1 \right|}
\newcommand{\gG}{\Gamma}
\newcommand{\gs}{\sigma}
\newcommand{\gl}{\lambda}
\newcommand{\ga}{\alpha}
\newcommand{\gb}{\beta}
\newcommand{\N}{\nabla}
\newcommand{\CC}{\mathcal C}
\newcommand{\til}[1]{\widetilde{#1}}
\renewcommand{\bar}[1]{\overline{#1}}
\newcommand{\bv}{\bar{v}}
\newcommand{\IP}[1]{\left<#1\right>}
\newcommand{\Cp}{\mathcal{C}^{+}}
\newcommand{\Cm}{\mathcal{C}_m^{+}}
\newcommand{\Ca}{\mathcal{C}_{(A)}^{+}}
\DeclareMathOperator{\Rc}{Rc}
\DeclareMathOperator{\grad}{grad}
\newcommand{\dtu}{\frac{\del u}{\del t}}
\begin{document}

\title{Variational structure of the $v_{\frac{n}{2}}$-Yamabe problem}

\author{Matthew Gursky}
\address{Department of Mathematics
         University of Notre Dame\\
         Notre Dame, IN 46556}
\email{\href{mailto:mgursky@nd.edu}{mgursky@nd.edu}}

\author{Jeffrey Streets}
\address{Department of Mathematics\\
         University of California\\
         Irvine, CA  92617}
\email{\href{mailto:jstreets@math.uci.edu}{jstreets@math.uci.edu}}

\date{\today}

\thanks{M. Gursky gratefully acknowledges support from the NSF via DMS-1509633.
J. Streets gratefully acknowledges support from the NSF via DMS-1454854
and from the
Alfred P. Sloan Foundation via a Sloan Research Fellowship.}

\begin{abstract} We define a formal Riemannian metric on a
conformal class in the context of the $v_{\frac{n}{2}}$-Yamabe problem.  We also
give a new variational description of this problem, and show that the associated
functional is geodesically convex.  Formal properties of the negative gradient flow are
also described. These results parallel our work in two dimensions on the Liouville energy and
the uniformization of surfaces \cite{GS_Surfaces}, and our work in four dimensions on the $\sigma_2$-Yamabe problem \cite{GS2}.
\end{abstract}

\maketitle

\section{Introduction}

In \cite{GS_Surfaces} and \cite{GS2} we defined a formal Riemannian metric on the space of conformal metrics satisfying some notion of `positivity'.  In the case of surfaces this condition corresponded to positive Gauss curvature.  In four dimensions, given a conformal class $[g_0]$ we considered the subset
\begin{align} \label{P2C}
\mathcal{C}^{+} = \mathcal{C}^{+}( [g_0]) = \{ g_u = e^{-2u} g_0\ :\ A_u \in \Gamma_2^{+} \},
\end{align}
where $A_u$ is the Schouten tensor of the metric $g_u$ and $\Gamma^{+}_2$ is the positive $2$-cone.  Recall the Schouten tensor
is defined by
\begin{align} \label{Adef}
A = \frac{1}{n-2} \big( Ric - \frac{1}{2(n-1)} R g \big),
\end{align}
where $Ric$ and $R$ are the Ricci and scalar curvatures of $g$, and
\begin{align*}
A_g \in \Gamma_k^{+} \ \ \Longleftrightarrow\ \  \sigma_1(g^{-1} A_g) > 0,\dots, \sigma_k(g^{-1} A_g) > 0,
\end{align*}
where $\sigma_k(\cdot)$ is the elementary symmetric polynomial of degree $k$ of the endomorphism $g^{-1}A$ (see the Introduction of \cite{GS2}).  To simplify notation in what follows we will suppress the inverse of the metric $g_u$ and only write $A_g, A_u$, etc.  Assuming $\mathcal{C}^{+}$ is non-empty we defined the Riemannian metric on $\Cp$ by
\begin{align} \label{gskmetric}
 \IP{ \phi, \psi}_u =&\ \int_M \phi \psi \gs_{2}( A_u) dV_u.
\end{align}
Here we are using the natural identification of the tangent space to $\mathcal{C}^{+}$ at any point with $C^{\infty}(M)$.   Endowed with this metric $\Cp$ enjoys a number of nice formal properties; e.g., $\Cp$ has non-positive sectional curvature.

Of primary interest in our analysis were the variational properties of the functional $F : \Cp \rightarrow \mathbb{R}$ introduced by Chang-Yang \cite{ChangYangMoserVol}, whose critical points are conformal metrics satisfying
\begin{align} \label{s2Y}
\sigma_2(A_u) = const.,
\end{align}
i.e., $g_u$ is a solution of the {\em $\sigma_2$-Yamabe problem}.  In particular, we showed that the functional $F$ is geodesically convex and used this fact to prove a remarkable geometric consequence: solutions of (\ref{s2Y}) are {\em unique}, unless $(M^4,g_0)$ is conformally equivalent to the sphere.   This is a surprising departure from the classical Yamabe problem, where explicit examples of non-uniqueness are known.

To extend these results to higher dimensions $n \geq 6$, it would seem natural to consider the set of conformal metrics $\Cp = \{ g_u = e^{-2u}g_0\ :\ A_u \in \Gamma_{n/2}^{+} \}$, and define the inner product on the tangent space by
\begin{align} \label{IPk}
 \IP{ \phi, \psi}_u =&\ \int_M \phi \psi \sigma_{n/2}(A_u) dV_u.
\end{align}
However, defining the metric in this way introduces a number of technical issues which can be traced back to the fact that in dimensions $n \geq 6$, the quantity $\sigma_{n/2}(A_u)$ lacks the kind of `divergence structure' that it enjoys in dimension four, or the Gauss curvature enjoys in dimension two.  More concretely, the integral
\begin{align} \label{totalSk}
\int \sigma_{n/2}(A_g)\ dV_g
\end{align}
is conformally invariant when $n = 4$, but not in general when $n \geq 6$.  Indeed, this lack of divergence structure is a fundamental difficulty in the study of the {\em $\sigma_k$-Yamabe problem}, which asks whether it is possible to find a conformal metric $g_u = e^{-2u}g$ for which $\sigma_k(A_u) = const.$, assuming $A_g \in \Gamma_k^{+}$.

If $(M^n,g)$ is locally conformally flat (LCF) then the integral (\ref{totalSk}) is conformally invariant (\cite{BG}, \cite{JeffThesis}).  However, it follows from the work of Guan-Viaclovsky \cite{GuanVia} that when $A_g \in \Gamma_{n/2}^{+}$, then the Ricci curvature of $g$ is positive.  Consequently, by Kuiper's Theorem, in the LCF setting the space of conformal metrics $g_u = e^{-2u}g$ with $A_u \in \Gamma_{n/2}^{+}$  will be non-empty only when $(M^n,g)$ is conformally equivalent to the round sphere (or real projective space).  Consequently, imposing the LCF condition for metrics whose Schouten tensor is in $\Gamma_{n/2}^{+}$ is too restrictive.

What is needed in higher dimensions is a conformally invariant quantity of the correct weight which does not require the LCF condition.  Such a quantity appears in the consideration of the renormalized volume of Poincare-Einstein manifolds, see \cite{GrahamRNV}.  To explain this, we briefly recall some definitions.

Let $X$ be the interior of a compact manifold with boundary $\overline{X}$ of dimension $n+1$, and let $M  = \partial X$ denote the boundary.  A metric $g_{+}$ defined on $X$ is said to be {\em conformally compact} if there is a defining function $r \in C^{\infty}(X)$ with $r > 0$ and $dr \neq 0$ on $\partial X$, such that $r^2 g_{+}$ extends to a metric
$\overline{g}$ on $\overline{X}$.  Since we can multiply $r$ by any smooth positive function on $\overline{X}$, a conformally compact metric naturally defines a conformal class of metrics $[ g = \overline{g}|_{M} ]$ on $M = \partial X$, called the {\em conformal infinity} of $(X,g_{+})$.  If in addition $g_{+}$ satisfies the Einstein condition, which we normalize by
\begin{align*}
Ric(g_{+}) = -n g_{+},
\end{align*}
then we say that $(X,g_{+})$ is a {\em Poincar\'e-Eintein} (P-E) manifold.  If $r$ is a defining function such that $|dr|_{\bar{g}} = 1$ on $M = \partial X$ (referred to a as a {\em special} defining function), then $g_{+}$ can be written
\begin{align} \label{gexp}
g_{+} = r^{-2} ( dr^2 + g_r ),
\end{align}
where $g_r$ is a 1-parameter family of metrics on $M$ with $g_0 = g$.  When $n$ is even, Graham \cite{GrahamRNV} showed that
\begin{align}   \label{FGeven}
g_r = g^{(0)} + g^{(2)}r^2 + \cdots + g^{(n)}r^n  + h r^n \log r + \cdots,
\end{align}
where $g^{(0)} = g$ is the induced metric on $M$, the coefficients are formally determined by the conformal representative up to order $n-2$, and $h$ is also formally determined. Using this expansion, Graham gave an expansion for the volume form
\begin{align} \label{volform}
\Big( \dfrac{ \det(g_r) }{\det g}\Big) \sim 1 + \sum_{k \geq 1} v_k r^k,
\end{align}
where $v_k = v_k(g)$ are defined for $1 \leq k \leq n/2$ in general, but are defined for all $k \geq 1$ when $(M,g)$ is LCF.  When $k = 1, 2$, then $v_k(g) = \sigma_k(A)$, while in the LCF case this holds for $k \geq 3$.  Moreover, when $k = n/2$, then
\begin{align} \label{vb}
v = \int v_{n/2}(g) dV_g
\end{align}
is a conformal invariant.  Noting these parallels, Chang-Fang \cite{ChangFang} proposed the study of the functionals $g \mapsto \int v_k(g) dV_g$ for $k < n/2$ as the natural generalization of the functionals given by the
integrals of $\sigma_{k}(A)$ in the non-LCF setting.  This leads to another generalization of the Yamabe problem, the {\em $v_k$-Yamabe problem}, to find (under suitable conditions) in a given conformal class a critical point of the functional $g \mapsto \int v_k(g) dV_g$; i.e., a conformal metric for which $v_k$ is constant.  Later Graham
showed \cite{Graham} further structure of these quantities via their
relationship to ``extended obstruction tensors.''

These results suggest the following natural extension of the formal Riemannian structure given in our earlier work:  Let $(M^{2m},g)$ be a closed Riemannian manifold of even dimension $n = 2m$.  Given a conformal metric
$g_u = e^{-2u}g$, define the inner product
\begin{align} \label{vkmetric}
\IP{\ga,\gb}_u =&\ \int_{M^{2m}} \ga \gb v_m(g_u) dV_u.
\end{align}
For the inner product
above to be positive definite, it is clear that positivity of $v_k(g_u)$ is
required, hence we restrict our conformal metrics to the set
\begin{align} \label{Gammakdef}
 \Cm := \{ u \in C^{\infty}(M)\ |\ v_m(g_u) > 0,\ L > 0 \}.
\end{align}
Here $L$ is the principal symbol of the linearization of $v_m$ (see
\cite{Graham} Theorem 1.5, recorded as Theorem \ref{grahamthm} below).  Although the addition of this assumption may seem superfluous, it will be
important when verifying certain properties of the metric defined by (\ref{vkmetric}).  Moreover, it is easy to see that when the dimension is four this set corresponds to the positive cone defined in (\ref{P2C}).

With these definitions we can now state the objectives of the paper.  Our first goal is to verify some of the basic formal properties of the metric defined in
(\ref{vkmetric}), and to write down the induced connection.  This is carried out in \S \ref{MC}.  For these results we only need to assume that our conformal metrics are in the set $\Cm$.  Next, we prove the existence of a functional $F : [g] \rightarrow \mathbb{R}$ generalizing he functional of Chang-Yang, whose critical points correspond to conformal metrics satisfying
\begin{align} \label{vkY}
v_m(g_u) = const.
\end{align}
(compare with (\ref{s2Y})).  Our construction is an adaptation of the method of Brendle-Viaclovsky \cite{BV}, who gave another proof of the Chang-Yang construction in \cite{ChangYangMoserVol}.   In analogy with our work in low dimensions, we would like to verify that $F$ is geodesically convex.

Here we encounter the crucial point that to understand the variational properties of $F$, we need to restrict to a smaller set of conformal metrics: even in the LCF case, $\Cm$ is strictly larger than the positive $n/2$-cone, so we do not expect $F$ to have nice properties on $\Cm$.  A shift in perspective is warranted, and instead of defining cones by imposing positivity
conditions on various curvature quantities, we instead consider metrics which verify an {\em Andrew's-type inequality}:
\begin{align*}
n \Big[ \int_M \phi^2 dV_g - V_{g}^{-1} \big( \int_M \phi dV_g \big)^2 \Big] &\leq \int_{M} \frac{1}{v_m(g)} L^{ij} \nabla_i \phi \nabla_j \phi dV_g,  \ \ \ \ \mbox{(A)}
\end{align*}
where $V_g$ is the volume of $g$, and equality holds if and only if $\phi = 0$ or $(M^{2m},g)$ is conformally equivalent to the round sphere and $\phi$ is a first-order spherical harmonic.  When the dimension $n =4$ and $A_g \in \Gamma_2^{+}$, this inequality is a consequence of the sharp Poincar\'e-type inequality of Andrews \cite{Andrews}, and played a key role in our previous work.  In dimensions $n = 2m \geq 6$, given a conformal class $[g]$ we define the set
\begin{align} \label{Cp}
\Ca = \{ g_u = e^{-2u}g \in \Cm\ :\ \mbox{(A) holds for }g_u \}.
\end{align}
Although this set seems unrelated to any `cone of ellipticity' for the equation (\ref{vkY}), we will see that it carries all the information needed to verify the desired variational properties of $F$.  In addition, when the manifold is LCF we will show that (A) holds for any metric in the positive $n/2$-cone (see Proposition \ref{andrewstype} below).  This is further evidence that this condition is natural. 

To summarize, our first main result is the following:

\begin{thm} \label{functionalprops} Let $(M^{2m}, g)$, be a closed, even-dimensional Riemannian manifold.\\

\noindent $(i)$ There exists a
functional $F : [g] \to \mathbb R$ such that the critical points of $F$ satisfy
(\ref{vkY}). \\

\noindent $(ii)$ $F : \Ca \rightarrow \mathbb{R}$ is geodesically convex with respect to the Riemannian structure defined by
(\ref{vkmetric}). \end{thm}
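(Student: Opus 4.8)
\emph{Part $(i)$.} The plan is to produce $F$ as a primitive of an explicit closed $1$-form on $C^\infty(M)$, in the spirit of Brendle--Viaclovsky \cite{BV}. Set $v := \int_M v_m(g)\,dV_g$, which by (\ref{vb}) is a conformal invariant, write $V_u := \Vol(g_u)$, and consider the $1$-form
\begin{align*}
\gw_u(\phi) \;=\; \int_M \phi\, v_m(g_u)\, dV_{g_u} \;-\; \frac{v}{V_u}\int_M \phi\, dV_{g_u},
\end{align*}
whose zeros are exactly the metrics with $v_m(g_u) \equiv v/V_u$, i.e.\ the solutions of (\ref{vkY}). Using $\tfrac{d}{dt}\big|_{t=0} dV_{g_{u+t\psi}} = -n\psi\, dV_{g_u}$ one checks that the second term is $d\!\left(\tfrac{v}{n}\log V_u\right)$, hence closed; so the issue is the closedness of $\beta_u(\phi) := \int_M \phi\, v_m(g_u)\, dV_{g_u}$. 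Its exterior derivative on tangent vectors $\phi,\psi$ works out to $\int_M \phi\,\big(\tfrac{d}{dt}\big|_{t=0}v_m(g_{u+t\psi})\big)\,dV_{g_u} - \int_M \psi\,\big(\tfrac{d}{dt}\big|_{t=0}v_m(g_{u+t\phi})\big)\,dV_{g_u}$, so $\beta$ is closed precisely when the conformal linearization of $v_m$ is formally self-adjoint with respect to $dV_{g_u}$ --- which I would read off from the divergence structure in Theorem \ref{grahamthm}. As $C^\infty(M)$ is contractible, $\gw = dF$ with
\begin{align*}
F[u] \;=\; \int_0^1\!\int_M \dot u_t\, v_m(g_{u_t})\,dV_{g_{u_t}}\,dt \;+\; \frac{v}{n}\log V_u
\end{align*}
(for any path $u_t$ from $0$ to $u$), and by construction its critical points satisfy (\ref{vkY}); one also checks this reduces to the Chang--Yang functional when $n=4$.

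\emph{Part $(ii)$.} Let $u(s)$ be a geodesic for the metric (\ref{vkmetric}) lying in $\Ca$; I must show $\tfrac{d^2}{ds^2}F[u(s)] \geq 0$. From $(i)$, $\tfrac{d}{ds}F[u(s)] = \int_M \dot u\,\big(v_m(g_{u(s)}) - v/V_{u(s)}\big)\,dV_{g_{u(s)}}$. Differentiating again produces a ``flat'' Hessian part --- from the linearization of $v_m$, the variation of $dV_{g_{u(s)}}$, and the variation of $v/V_{u(s)}$ --- together with a term $\int_M \ddot u\,\big(v_m(g_{u(s)}) - v/V_{u(s)}\big)\,dV_{g_{u(s)}}$. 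The decisive step is to substitute the geodesic equation of \S \ref{MC} for $\ddot u$, insert the formula of Theorem \ref{grahamthm} for the linearization of $v_m$, and integrate by parts: the Levi-Civita connection of (\ref{vkmetric}) is arranged so that the zeroth- and first-order contributions cancel, leaving
\begin{align*}
\frac{d^2}{ds^2}F[u(s)] \;=\; \int_M \frac{1}{v_m(g_{u(s)})}\, L^{ij}\,\N_i\dot u\, \N_j \dot u\,\,dV_{g_{u(s)}} \;-\; n\left[\int_M \dot u^{\,2}\,dV_{g_{u(s)}} - V_{u(s)}^{-1}\Big(\int_M \dot u\,dV_{g_{u(s)}}\Big)^{2}\right].
\end{align*}
Since $g_{u(s)} \in \Ca$, the Andrews-type inequality (A) with $\phi = \dot u$ shows the right-hand side is $\geq 0$, proving geodesic convexity of $F$ on $\Ca$.

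The first-variation computations and the closedness check in $(i)$ are routine once Theorem \ref{grahamthm} is granted. The hard part will be the second-variation computation in $(ii)$: extracting from the Levi-Civita connection of (\ref{vkmetric}) and Graham's linearization formula the exact cancellations that collapse $\tfrac{d^2}{ds^2}F[u(s)]$ onto the Andrews-type expression above. This is the same mechanism as in \cite{GS_Surfaces} and \cite{GS2}, but in the present generality it requires careful control of the lower-order terms in the linearization of $v_m$, which --- unlike $\sigma_2$ in dimension four --- no longer has the simplest divergence structure; it is also here that the extra hypothesis $L>0$ in the definition of $\Cm$, together with the restriction to $\Ca$, gets used.
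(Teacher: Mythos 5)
Your proposal is correct and follows essentially the same route as the paper: part $(i)$ is the Brendle--Viaclovsky argument (the $1$-form $\phi \mapsto \int_M \phi\, v_m(g_u)\,dV_u$ is closed by the divergence structure of Theorem \ref{grahamthm}, hence exact by contractibility, and one adds the $\frac{v}{n}\log V_u$ correction), and part $(ii)$ is the second variation along a geodesic, substitution of the geodesic equation for $\ddot u$, and the Andrews-type inequality (A) applied to $\phi = \dot u$. The ``cancellation'' you flag as the hard part is handled cleanly in the paper by Lemma \ref{metricsplit}, which shows $\int_M \dot u\, v_m\, dV_u$ is constant along geodesics, so only the volume term needs to be differentiated; up to the harmless positive prefactor $v V_u^{-1}$ your final Hessian formula matches the paper's.
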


As discussed above, in the LCF setting if the cone $\Gamma_{n/2}^{+}$ is non-empty then (up to a double cover) the manifold is the round sphere.  In this case we show that $\Gamma_{n/2}^{+}$  lies in $\CC^+_{(A)}$, yielding a new perspective on the uniqueness of solutions originally established by Viaclovsky \cite{JeffAMS}:

\begin{thm} \label{LCF}   Let $(S^{2m}, g_0)$ be the round sphere of dimension $2m$.  Then
\begin{align} \label{cones}
\Gamma_{m}^{+}([g_0]) = \{ g_u = e^{-2u}g_0\ :\ A_u \in \Gamma_{m}^{+} \} \subseteq \Ca.
\end{align}
In particular, (A) holds and $F : \Gamma_{m}^{+}([g_0]) \rightarrow \mathbb{R}$ is geodesically convex.  Also, critical points of $F$ are given by the round metric and its image under the conformal group \cite{JeffAMS}.
\end{thm}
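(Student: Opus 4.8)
The plan is to reduce the theorem to the inclusion \eqref{cones}: once that is established, the geodesic convexity of $F$ on $\Gamma_m^+([g_0])$ is immediate from Theorem \ref{functionalprops}$(ii)$, and the description of the critical points follows from the variational characterization of Theorem \ref{functionalprops}$(i)$ together with known rigidity on the sphere. Thus the main task is: given $g_u = e^{-2u} g_0$ with $A_u \in \Gamma_m^+$, show that $g_u \in \Cm$ and that inequality (A) holds for $g_u$.

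First I would verify $g_u \in \Cm$, i.e. $v_m(g_u) > 0$ and $L > 0$. Since $g_0$ is locally conformally flat, so is $g_u$; hence by the formulas recalled in the Introduction (\cite{BG}, \cite{JeffThesis}) we have $v_m(g_u) = \sigma_m(A_u)$, which is positive because $A_u \in \Gamma_m^+$. For the symbol condition, Graham's Theorem \ref{grahamthm} identifies $L$ with the principal symbol of the linearization of $g_u \mapsto v_m(g_u)$; linearizing the LCF identity $v_m = \sigma_m(A)$ along the conformal family through $g_u$ and using $A_{e^{-2w}g_u} = A_u + \nabla^2 w + dw\otimes dw - \tfrac12 |\nabla w|^2 g_u$ shows that, up to a positive constant, $L^{ij}$ equals the Newton tensor $T_{m-1}(A_u)^{ij}$. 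Since $A_u \in \Gamma_m^+$, the eigenvalues of $T_{m-1}(A_u)$ are the numbers $\sigma_{m-1}(\lambda | i) > 0$, where $\lambda$ are the eigenvalues of $A_u$ --- a standard fact from G\aa rding's theory of hyperbolic polynomials. Hence $L > 0$ and $g_u \in \Cm$.

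With $g_u \in \Cm$ locally conformally flat and $A_u \in \Gamma_m^+$, I would then invoke Proposition \ref{andrewstype}, which yields precisely inequality (A) for $g_u$ together with its equality case; therefore $g_u \in \Ca$, proving \eqref{cones}. As a sanity check on the equality statement, for $g_u = g_0$ one has $A_{g_0} = \tfrac12 g_0$, so $v_m(g_0) = \sigma_m(A_{g_0})$ and $T_{m-1}(A_{g_0})$ are constant multiples of $g_0$ with ratio $2\binom{n-1}{m-1}/\binom{n}{m} = 2m/n = 1$; thus (A) for $g_0$ is exactly the sharp Poincar\'e inequality on $S^{2m}$, whose first nonzero Laplace eigenvalue is $n = 2m$, with equality space the first-order spherical harmonics. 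Given \eqref{cones}, Theorem \ref{functionalprops}$(ii)$ shows $F$ is geodesically convex along any geodesic of the metric \eqref{vkmetric} lying in $\Gamma_m^+([g_0])$.

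It remains to describe the critical points. The round metric $g_0$ and its images $\psi^* g_0$ under the conformal group lie in $\Gamma_m^+([g_0])$ --- since $A_{\psi^* g_0} = \psi^* A_{g_0}$ is positive definite --- and satisfy $v_m = \sigma_m(A) = \mathrm{const}$, so by Theorem \ref{functionalprops}$(i)$ they are critical points of $F$; conversely, a critical point $g_u \in \Gamma_m^+([g_0])$ satisfies $\sigma_m(A_u) = v_m(g_u) = \mathrm{const}$, and by Viaclovsky's classification \cite{JeffAMS} on the round sphere it must be one of the $\psi^* g_0$. One can also give a self-contained, convexity-based proof of this last step: joining a critical point $g_u$ to $g_0$ by a geodesic in $\Gamma_m^+([g_0])$, convexity of $F$ together with the vanishing of $dF$ at both endpoints forces $F$ to be constant along the geodesic, hence the whole geodesic consists of critical points, and the equality case of (A) then forces its initial velocity to be a first-order spherical harmonic --- i.e. the geodesic is the flow of a conformal vector field, and $g_u = \psi^* g_0$. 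The one substantial input is Proposition \ref{andrewstype}, which is established separately; within the present argument the only delicate points are the identification of $L$ with $T_{m-1}(A_u)$ from Graham's symbol formula in the LCF setting, and, for the self-contained version of the last step, the analysis of the equality case of (A).
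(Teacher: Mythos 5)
Your proposal is correct and follows essentially the same route as the paper: the LCF identities $v_m(g_u) = \sigma_m(A_u)$ and $L = T_{m-1}(A_u)$ reduce inequality (A) to Proposition \ref{andrewstype}, and the remaining claims follow from Theorem \ref{functionalprops} together with Viaclovsky's classification \cite{JeffAMS}, exactly as in the paper (your added verification that $L>0$, so that $g_u \in \Cm$, is a detail the paper leaves implicit but is welcome). The only caution is that your optional ``self-contained'' uniqueness argument presupposes the existence of geodesics in $\Gamma_m^{+}([g_0])$, which the paper explicitly states it does not establish; the citation of \cite{JeffAMS} is the intended route.
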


The formal framework given by Theorem \ref{functionalprops} naturally
suggests that solutions to (\ref{vkY}) are unique up to scaling:

\begin{conj} Let $(M^{2m}, g)$, be a closed, even-dimensional Riemannian manifold such
that $\Ca \neq \emptyset$.  If $(M^{2m}, g)$ is not conformally equivalent to $(S^{2m},
g_0)$, then there exists a unique $u \in C^{\infty}(M)$ such that
$\int_M u dV_g = 0$ and $v_m(e^{-2u} g) = \bv$.
\end{conj}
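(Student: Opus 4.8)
The plan is to adapt the argument of \cite{GS2}, where the four-dimensional version of this uniqueness statement was deduced from the geodesic convexity of $F$ together with the Andrews-type inequality. The uniqueness half of the conjecture is the part most directly accessible. Suppose $g_{u_0} = e^{-2u_0}g$ and $g_{u_1} = e^{-2u_1}g$ are two solutions of $(\ref{vkY})$ with $\int_M u_0\,dV_g = \int_M u_1\,dV_g = 0$. Granting (for now) that both lie in $\Ca$ and can be joined by a geodesic $\{u_t\}_{t\in[0,1]}$ of the Riemannian structure $(\ref{vkmetric})$ that remains in $\Ca$, the argument would run as follows: since by Theorem $\ref{functionalprops}(i)$ the solutions of $(\ref{vkY})$ are exactly the critical points of $F$, the function $t\mapsto \tfrac{d}{dt}F(u_t)$ vanishes at $t=0$ and $t=1$; by the geodesic convexity in Theorem $\ref{functionalprops}(ii)$ it is nondecreasing on $[0,1]$, hence identically zero, so $F$ is constant along the geodesic. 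Differentiating again and using $\N_{\dot u_t}\dot u_t = 0$, the second variation $(\mathrm{Hess}\,F)(\dot u_t,\dot u_t)$ vanishes for every $t$.

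The second step is to extract the geometric consequence. The computation that proves geodesic convexity in Theorem $\ref{functionalprops}(ii)$ should express $(\mathrm{Hess}\,F)(\phi,\phi)$ along the geodesic as a nonnegative multiple of the deficit in inequality (A) for the metric $g_{u_t}$ with test function $\phi = \dot u_t$; hence vanishing of the second variation forces equality in (A) at every $t$. By the equality case of (A), for each $t$ either $\dot u_t \equiv 0$, or $(M^{2m},g_{u_t})$ is conformally the round sphere and $\dot u_t$ is a first-order spherical harmonic. The second alternative is ruled out by hypothesis, so $\dot u_t\equiv 0$ and $u_0 = u_1$. (On $(S^{2m},g_0)$ the first-order spherical harmonics generate the noncompact part of the conformal group, which is exactly why uniqueness fails there; this is consistent with Theorem $\ref{LCF}$, so the exclusion of the sphere is necessary.)

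For this to be a genuine proof, two gaps in the uniqueness argument must be closed, and both seem substantial. First, one must show that every solution of $(\ref{vkY})$ with $v_m > 0$ lies in $\Ca$, i.e.\ satisfies (A); this is known only in the LCF case via Theorem $\ref{LCF}$, and in general it would require a sharp Poincar\'e-type inequality for metrics with $v_m$ a positive constant and $L > 0$, presumably via a lower Ricci bound in the spirit of the positivity result of Guan--Viaclovsky \cite{GuanVia}. Second, one needs $\Ca$ to be geodesically convex (or at least that two given solutions are joined by a geodesic staying in $\Ca$), together with enough regularity of the geodesic to justify the differentiations above; in dimension four this was supplied by the favorable structure of the cone $\Gamma_2^{+}$, whereas $\Ca$ is defined by an integral inequality and its behavior along geodesics is harder to control.

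The principal obstacle, however, is \emph{existence}: unlike uniqueness, this cannot be read off from the variational picture. The natural approaches are the negative gradient flow of $F$, whose formal properties are described in this paper, or a continuity method; either route requires a priori $C^\infty$ estimates for solutions of the fully nonlinear equation $v_m(e^{-2u}g) = \mathrm{const}$, and such estimates are not available in general when $n = 2m\geq 6$, their absence being precisely the source of difficulty in the $\sigma_k$- and $v_k$-Yamabe problems. If existence of a solution in $\Ca$ is granted, then geodesic convexity of $F$ on $\Ca$ should moreover yield exponential convergence of the normalized gradient flow to that solution; but converting the formal structure here into theorems is exactly what the conjecture leaves open.
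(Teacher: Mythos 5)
The statement you are addressing is stated in the paper as a \emph{Conjecture}, not a theorem: the authors give no proof, and they say explicitly that verifying it ``along the lines of \cite{GS_Surfaces, GS2} would require existence results for the geodesic problem, which we do not address here.'' Your outline is exactly the strategy the authors have in mind: join two critical points by a geodesic in $\Ca$, use Proposition \ref{CYgeodconv} to see that $\frac{d}{dt}F(u_t)$ is nondecreasing and vanishes at both endpoints, hence the second variation vanishes identically; and the computation in Proposition \ref{CYgeodconv} does indeed exhibit $\frac{d^2}{dt^2}F$ as $v\,V_u^{-1}$ times the deficit in (A) with test function $\dot u_t$, so vanishing forces the equality case of (A), which (away from the sphere) forces $\dot u_t \equiv 0$. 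That part of your write-up is a faithful reconstruction of the intended argument and is consistent with everything proved in the paper.

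However, as a proof of the conjecture it has genuine gaps, which to your credit you name yourself, and they are exactly the reasons the statement remains a conjecture: (1) \emph{existence of a solution} of $v_m(g_u)=\bar v$ in $\Ca$ is not addressed by the variational framework at all (the paper lacks even short-time existence for the gradient flow, and there is no usable ellipticity structure for $v_m$ when $n\geq 6$); (2) \emph{existence and regularity of a connecting geodesic staying in $\Ca$} is open — the geodesic equation (\ref{pathgeod}) is a degenerate fully nonlinear PDE, and $\Ca$ is cut out by an integral inequality whose stability along geodesics is not established; (3) it is not known that every solution of $v_m(g_u)=\bar v$ with $v_m>0$, $L>0$ automatically satisfies (A), i.e.\ lies in $\Ca$, outside the LCF case treated in Theorem \ref{LCF}. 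One small imprecision: Theorem \ref{functionalprops}$(i)$ as stated only says critical points of $F$ satisfy (\ref{vkY}); the converse you need (that a metric with $v_m=\bar v$ is a critical point) is immediate from the first variation formula (\ref{Lvar}), but you should cite that formula rather than the theorem. In summary: your proposal is not a proof, and cannot be compared unfavorably to the paper's, because the paper offers none; it is a correct account of the intended mechanism together with an accurate list of the open steps.
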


\noindent To verify this along the lines of \cite{GS_Surfaces, GS2} would require existence results for the geodesic problem, which we do not address here.

%
%
%
%
As in our previous work, we also consider the gradient flow of $F$ with respect to the metric (\ref{vkmetric}).  Using properties of
$F$ which follow from its construction, one can show that the negative gradient flow (written as an evolution equation for the conformal
factor) is given by
\begin{align} \label{iskflow}
\dt u =&\ 1 - \frac{\bar{v}}{v_m(g_u)}.
\end{align}
For simplicity we will refer to this as the \emph{inverse $v_m$-flow}.  In
\S \ref{gradflowsec} we observe a number of formal
properties for solutions of this flow in the set $\Ca$.  These are in line with the properties established for
Calabi flow in relation to the Mabuchi metric in K\"ahler geometry
\cite{CalabiChen, Donaldson, MabuchiSymp, Semmes}:

\begin{thm} \label{iskflowprops} Let $(M^{2m}, g)$ be a closed, even-dimensional
Riemannian manifold, and suppose $u = u(t)$ is a solution to inverse $v_m$-flow with $g_u = e^{-2 u}g$ and $g_u \in \Ca$.
\begin{enumerate}

\item  Then $F$ is convex along the flow:
\begin{align*}
\frac{d^2}{dt^2} F[u] \geq 0.
\end{align*}

\item The following entropy estimate holds along the flow:
\begin{align*}
\frac{d}{dt} \int_M v_m \log v_m dV_u \leq&\ 0.
\end{align*}

\end{enumerate}
\end{thm}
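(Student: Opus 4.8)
\emph{Proof sketch.} Both statements follow by combining the variational package built for $F$ with the positivity encoded in $\Ca$. Write $\dot u := \dt u = 1 - \bar{v}\, v_m(g_u)^{-1}$ and $\bar{v} := V_u^{-1}\int_M v_m(g_u)\,dV_u$, the average of $v_m$ over $(M,g_u)$; recall from the construction of $F$ that (\ref{iskflow}) is the negative gradient flow of $F$ with respect to (\ref{vkmetric}), i.e. $\grad F = -\dot u$ and $dF_u(\phi) = \int_M (\bar{v} - v_m(g_u))\,\phi\,dV_u$. In particular, along the flow
\begin{align*}
\tfrac{d}{dt}F[u(t)] = -\IP{\dot u,\dot u}_u = -\int_M v_m^{-1}(v_m - \bar{v})^2\,dV_u \le 0 .
\end{align*}

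For part (1), the plan is to differentiate once more using the Levi--Civita connection $\nabla$ of (\ref{vkmetric}) computed in \S\ref{MC}. Along any path, $\tfrac{d^2}{dt^2}F = \tfrac{d}{dt}\IP{\grad F,\dot u}_u = \IP{\nabla_{\dot u}\grad F,\dot u}_u + \IP{\grad F,\nabla_{\dot u}\dot u}_u$; on the gradient flow $\grad F = -\dot u$ forces $\nabla_{\dot u}\dot u = -\nabla_{\dot u}\grad F$, so that, using symmetry of the Hessian,
\begin{align*}
\tfrac{d^2}{dt^2}F[u(t)] = 2\,\mathrm{Hess}\,F(\dot u,\dot u).
\end{align*}
Theorem \ref{functionalprops}(ii) — geodesic convexity of $F$ on $\Ca$ — is precisely the statement that $\mathrm{Hess}\,F\ge 0$ as a quadratic form on the tangent spaces over $\Ca$, so since $g_u\in\Ca$ for all $t$ by hypothesis we get $\tfrac{d^2}{dt^2}F\ge 0$. (Equivalently, and closer to the proof of Theorem \ref{functionalprops}(ii), one substitutes $\phi=\dot u$ directly into the second-variation formula for $F$ and invokes the Andrews-type inequality (A); either route reduces part (1) to facts already established.)

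For part (2), the plan is a direct computation. Using $\dt(dV_u) = -n\dot u\,dV_u$,
\begin{align*}
\frac{d}{dt}\int_M v_m\log v_m\,dV_u = \int_M \dot v_m\,(\log v_m + 1)\,dV_u - n\int_M \dot u\, v_m\log v_m\,dV_u ,
\end{align*}
where $\dot v_m$ is the conformal linearization of $v_m$ in the direction $\dot u$. By Theorem \ref{grahamthm} this takes the form $\dot v_m = \nabla_i\!\left(L^{ij}\nabla_j\dot u\right) + n\, v_m\,\dot u$ (consistently, the conformal invariance of $\int_M v_m\,dV_u$ forces $\int_M \dot v_m\,dV_u = n\int_M \dot u\, v_m\,dV_u$). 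Expanding, integrating the divergence term by parts, and cancelling the zeroth-order contributions against the $\log v_m$-weighted volume term and against $n\int_M\dot u\,v_m\,dV_u$ (the latter vanishing because $\bar v$ is the average of $v_m$), one is left with
\begin{align*}
\frac{d}{dt}\int_M v_m\log v_m\,dV_u = -\int_M \frac{L^{ij}\nabla_i v_m\,\nabla_j\dot u}{v_m}\,dV_u .
\end{align*}
Finally $\nabla_j\dot u = \bar{v}\, v_m^{-2}\nabla_j v_m$, so the right-hand side equals $-\bar{v}\int_M v_m^{-3}\,L^{ij}\nabla_i v_m\,\nabla_j v_m\,dV_u \le 0$, since $L$ is positive definite (part of the definition of $\Cm\supseteq\Ca$) and $\bar{v},\,v_m>0$.

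The main obstacle is in part (2): extracting the precise form of the conformal linearization of $v_m$ from Theorem \ref{grahamthm} and checking that the lower-order terms cancel exactly — not merely after integration — which is exactly the point at which the special weight $k=n/2$, and the consequent conformal invariance of $\int_M v_{n/2}\,dV$, is used; it is prudent to cross-check the formula against $v_1 = \sigma_1(A)$ and $v_2 = \sigma_2(A)$. (A possible first-order term in the linearization is necessarily divergence-free, and one checks it contributes nothing above, so the cancellation structure is robust.) Part (1), by contrast, is essentially formal once \S\ref{MC} and Theorem \ref{functionalprops}(ii) are in hand; the only care required is in justifying the identity $\tfrac{d^2}{dt^2}F = 2\,\mathrm{Hess}\,F(\dot u,\dot u)$ within the weak Riemannian framework of (\ref{vkmetric}).
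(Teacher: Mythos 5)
Your proposal is correct, and part (2) follows essentially the same path as the paper; part (1) takes a genuinely different, more abstract route. For part (1) the paper (Proposition \ref{Fflowconvexity}) argues by a self-contained computation: using conformal invariance of $v=\int_M v_m\,dV_u$ it writes $\tfrac{d}{dt}F = v - \bar{v}^2\int_M v_m^{-1}\,dV_u$ along the flow, differentiates once more via Graham's linearization formula, and applies the Andrews-type inequality (A) to the test function $\phi = v_m^{-1}$. You instead invoke the formal gradient-flow identity $\tfrac{d^2}{dt^2}F = 2\,\mathrm{Hess}\,F(\dot u,\dot u)$ and quote geodesic convexity. The two agree because $\dot u = 1-\bar{v}v_m^{-1}$ is an affine function of $v_m^{-1}$, so applying (A) to $\dot u$ or to $v_m^{-1}$ is the same statement up to the factor $\bar{v}^2$; and your reduction is legitimate here because the proof of Proposition \ref{CYgeodconv} actually establishes nonnegativity of the Hessian quadratic form on \emph{arbitrary} tangent vectors at points of $\Ca$ (the inequality (A) is applied there to an arbitrary $\phi$), not merely along geodesic velocities. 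Your route buys brevity; the paper's buys independence from the purely formal identification of $\tfrac{d^2}{dt^2}F$ along a gradient trajectory with the Hessian in this weak Riemannian setting — the one point you correctly flag as needing care. For part (2) your computation matches Proposition \ref{entropymonotonicity}: Graham's formula, one integration by parts, and cancellation of the zeroth-order terms using $\int_M \dot u\, v_m\, dV_u = 0$. Your final expression $-\bar{v}\int_M v_m^{-3}L^{ij}\nabla_i v_m\nabla_j v_m\,dV_u$ is in fact the correct one; the paper records it as $-\bar{v}\int_M \IP{L,\nabla v_m^{-1}\otimes\nabla v_m^{-1}}$, whose integrand differs from yours by a factor of $v_m$ — a harmless discrepancy, since both expressions are manifestly nonpositive given $L>0$ and $v_m,\bar{v}>0$, which is all the theorem requires.
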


Despite the excellent formal properties of the (negative) gradient flow, we lack a short-time existence result for solutions.  This is related to the
more general issues involved in the study of the $v_k$-Yamabe problem for $k \geq 3$, and are discussed in \cite{ChangFang} and \cite{Graham}.  A fundamental difficulty is identifying a useful notion of ellipticity.  For example,
there is no obvious way to conclude that the linearized operator $L$ is positive definite by imposing sign conditions on the $v_k$'s.  In the case where $k = m = n/2$, it would be
interesting to see whether the validity of the Andrews-type inequality (A) could be used in place of an algebraic condition.   Indeed one of the motivations for this paper is to attempt to put the $v_k$-Yamabe problem (at least for $k = n/2$) in the framework of a convex variational problem defined on a metric space, with the eventual goal of proving the existence of critical points without resorting to elliptic theory (i.e., `pointwise' methods).

\subsection*{Acknowledgements} The authors would like to thank Robin Graham for
several useful conversations, and Fedor Petrov for providing help with
Proposition \ref{crooshineq}.

%
%
%
%
%
%
\section{Metric and connection}  \label{MC}

We begin with the following variational formula for $v_m$ shown by
Graham
\cite{Graham}.  Note that we have changed convention from that paper and
parameterize conformal metrics via $g_u = e^{-2u} g$.

\begin{thm} \label{grahamthm} (\cite{Graham} Theorem 1.5) Given $(M^{2m}, g)$ be a
closed, even-dimensional Riemannian manifold, and let $u = u(t)$ be a
one-parameter family of conformal factors such that $u(0) = 0$, and
\begin{align*}
\left. \frac{d}{dt} u \right|_{t=0} = \dot{u}.
\end{align*}
There exists a natural tensor $L$ such that
\begin{align} \label{dsig}
\left. \frac{d}{dt} v_m(e^{-2u(t)}g) \right|_{t=0} = n \dot{u} v_m(g) + \N_i \left( L_{}^{ij} \N_j
\dot{u} \right).
\end{align}
\end{thm}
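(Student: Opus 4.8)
The statement is Graham's, and the plan is to derive (\ref{dsig}) from two ingredients: the construction of $v_m$ out of the Fefferman--Graham volume expansion (\ref{volform}), and the conformal invariance (\ref{vb}) of its total integral. First I would record the soft consequences of the construction. Each coefficient $g^{(2j)}$ in (\ref{FGeven}) is a natural tensor, polynomial in the curvature of $g = g^{(0)}$ and its covariant derivatives, so $v_m(g)$ is a natural scalar Riemannian invariant; and tracing through how (\ref{gexp})--(\ref{volform}) behave under a constant rescaling shows $v_m$ is homogeneous of weight $-n$, i.e. $v_m(c^2 g) = c^{-n} v_m(g)$ for constants $c>0$. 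Denote by $D_g\dot u$ the conformal linearization on the left side of (\ref{dsig}) and set $\mathcal{P}_g(\dot u) := D_g\dot u - n\dot u\, v_m(g)$, a natural linear differential operator in $\dot u$. Differentiating $\int_M v_m(g_u)\,dV_{g_u} = \int_M v_m(g)\,dV_g$ at $u=0$, using $\tfrac{d}{dt}dV_{g_u}\big|_0 = -n\dot u\, dV_g$, gives
\[
\int_M \mathcal{P}_g(\dot u)\, dV_g = 0 \qquad \text{for all } \dot u \in C^\infty(M)\ \text{and all } g \text{ in the conformal class,}
\]
while taking $\dot u$ constant (so $g_u = e^{-2t}g$) and using homogeneity, $v_m(g_u)=e^{nt}v_m(g)$, shows that $\mathcal{P}_g$ annihilates constants.

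It remains to prove the real content: that $\mathcal{P}_g$ is a \emph{second-order} operator and has the divergence form $\mathcal{P}_g(\dot u)=\N_i(L^{ij}\N_j\dot u)$ for a natural symmetric $2$-tensor $L$. This is exactly the ``divergence structure'' whose absence for $\sigma_k(A)$ with $k\geq 3$ was noted after (\ref{totalSk}): for $\sigma_3$ the conformal linearization of the curvature quantity is genuinely fourth order, and it is the Poincar\'e--Einstein structure that forces the higher-order terms to cancel in the present case. I would establish it following Graham \cite{Graham}: write $v_m$ and the $g^{(2j)}$ recursively in terms of the extended obstruction tensors $\Omega^{(k)}$, differentiate the recursion obtained from $Ric(g_+) = -n g_+$ in the conformal direction $\dot u$, and track the resulting identities; the terms of order $>2$ in $\dot u$ cancel in successive stages of the recursion, leaving a second-order divergence-form remainder whose coefficient $L$ is an explicit natural combination of the $\Omega^{(k)}$ and the curvature of $g$. (As a consistency check: once one knows $\mathcal{P}_g$ is second order \emph{and} formally self-adjoint, writing $\mathcal{P}_g(\dot u)=a^{ij}\N_i\N_j\dot u + b^i\N_i\dot u$ — no zeroth-order term, since constants are annihilated — and matching with the formal adjoint $\N_i\N_j(a^{ij}\,\cdot\,)-\N_i(b^i\,\cdot\,)$ forces $b^j=\N_i a^{ij}$, hence $\mathcal{P}_g(\dot u)=\N_i(a^{ij}\N_j\dot u)$, and one takes $L$ to be the symmetric part of $a$.)

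The main obstacle is therefore the bookkeeping internal to the Fefferman--Graham recursion: verifying, stage by stage, that the high-order-in-$\dot u$ contributions to the conformal linearization of the volume coefficient telescope away, that what survives is of divergence type, and then reading off the explicit tensor $L$. Once $L$ is identified this way its naturalness is automatic, and (\ref{dsig}) follows; the reduction via conformal invariance of the total integral and via homogeneity is, by comparison, routine.
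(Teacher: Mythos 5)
The paper offers no proof of this statement: it is quoted from Graham (\cite{Graham}, Theorem 1.5), and the authors explicitly defer to the ambient metric construction there for the derivation of $L$. So there is no internal argument to compare yours against. Judged on its own terms, your proposal correctly carries out the soft reductions --- naturalness of $v_m$, the homogeneity $v_m(c^2g)=c^{-n}v_m(g)$ (hence that $\mathcal{P}_g$ annihilates constants), and the vanishing of $\int_M \mathcal{P}_g(\dot u)\,dV_g$ --- but these facts are very far from the conclusion, and you say so yourself. The entire content of the theorem is the claim that $\mathcal{P}_g$ is a second-order operator in divergence form; your proposal asserts that this follows from differentiating the Fefferman--Graham recursion in the conformal direction and watching the higher-order terms telescope away, without exhibiting the cancellation or the tensor $L$. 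That \emph{is} the theorem rather than a step toward it (as your own comparison with $\sigma_3(A)$ makes clear, the analogous statement is simply false for generic natural scalars of the right weight), so as a standalone proof the proposal has a genuine gap at precisely the point you flag.

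Two smaller cautions about the reductions themselves. First, you invoke the conformal invariance of $\int_M v_m\,dV$ as an input; in this paper that invariance is Corollary \ref{confinfcor}, derived \emph{from} Theorem \ref{grahamthm}, so to avoid circularity you must rely on Graham's independent proof via the renormalized volume expansion in \cite{GrahamRNV} rather than on the divergence structure you are trying to establish. Second, your parenthetical consistency check assumes $\mathcal{P}_g$ is formally self-adjoint, but that self-adjointness is essentially the closedness of the one-form $\ga$ in Lemma \ref{BVlemma}, which the paper again deduces from the divergence form of the linearization; it is not available a priori. Neither of these dooms the outline, but they underline that all of the load is carried by the unproved recursion argument.
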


\begin{cor} \label{confinfcor} Given $(M^{2m},g)$ as above, the quantity
\begin{align*}
v := \int_M v_m(g) dV
\end{align*}
is a conformal invariant.
\end{cor}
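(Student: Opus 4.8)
The plan is to deduce Corollary \ref{confinfcor} directly from Graham's variational formula (\ref{dsig}) by integrating over $M$. First I would fix a conformal class $[g]$ and, given any two metrics in it, connect them by a smooth path $g_{u(t)} = e^{-2u(t)}g$ of conformal metrics with $u(0) = 0$; it suffices to show that $\frac{d}{dt}\int_M v_m(g_{u(t)})\,dV_{u(t)} = 0$ for all $t$, since then the total integral is constant along the path and hence depends only on the conformal class.

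The key computation is to differentiate $\int_M v_m(g_{u(t)})\,dV_{u(t)}$ under the integral sign at an arbitrary time $t_0$; by reparameterizing we may as well take $t_0 = 0$. Here I would use two facts: the variation of the volume form under $g_u = e^{-2u}g$ in dimension $n = 2m$, namely $\frac{d}{dt}dV_{u(t)}\big|_{t=0} = -n\dot u\, dV$, and Graham's formula (\ref{dsig}) for $\frac{d}{dt}v_m\big|_{t=0}$. Combining these,
\begin{align*}
\frac{d}{dt}\Big|_{t=0} \int_M v_m(g_{u(t)})\,dV_{u(t)} &= \int_M \Big( n\dot u\, v_m(g) + \N_i(L^{ij}\N_j \dot u)\Big)\,dV - \int_M v_m(g)\, n\dot u\, dV \\
&= \int_M \N_i(L^{ij}\N_j \dot u)\,dV = 0,
\end{align*}
where the last equality is the divergence theorem on the closed manifold $M$ (the tensor $L$ being smooth, so that $L^{ij}\N_j\dot u$ is a smooth vector field). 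The two terms involving $n\dot u\, v_m(g)\,dV$ cancel exactly, which is the whole point of the weighting in the $v_m$-functional, and the remaining divergence term integrates to zero.

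To finish, I would note that a path of conformal metrics connecting any two given representatives of $[g]$ always exists — e.g. the linear path in the conformal factor $u$ — so the value of $\int_M v_m\,dV$ is independent of the choice of representative, i.e. it is a conformal invariant. I do not expect any real obstacle here: the only mild point of care is making sure Graham's formula (\ref{dsig}) is being applied at a general base metric in the conformal class rather than only at $g$ itself, which is handled by the reparameterization remark above, together with the observation that (\ref{dsig}) holds with $g$ replaced by any conformal metric $g_{u(t_0)}$ since the formula is natural (it is stated for an arbitrary closed even-dimensional manifold, and any metric in $[g]$ is such). Alternatively, one can simply run the computation once along the full path and observe that the integrand of $\frac{d}{dt}\int_M v_m\,dV$ vanishes identically for every $t$.
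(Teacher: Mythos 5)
Your proof is correct and is precisely the argument the paper intends: the corollary is stated as an immediate consequence of Theorem \ref{grahamthm}, obtained by integrating (\ref{dsig}) against $dV_u$, cancelling the $n\dot u\, v_m$ term against the variation $-n\dot u\, dV_u$ of the volume form, and discarding the divergence term on the closed manifold. Your care about applying (\ref{dsig}) at an arbitrary base metric in the conformal class (via reparameterization and naturality of $L$) is a reasonable point to make explicit, and the same implicit step appears in the paper's closely related computation in Lemma \ref{BVlemma}.
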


The tensor $L$ above is derived in \cite{Graham} using the ``ambient
metric construction," and the reader should consult that work for full details.
The relevant point for us is that the linearization is a divergence-form
operator.  Moreover, in the case that the metric $g$ is locally conformally
flat, $L$ is the $k-1$ Newton transformation of the Schouten tensor.

 We can now record some basic formal properties of the inner product defined by (\ref{vkmetric}).

\begin{defn} \label{sigmakmetric}  Let $(M^{2m}, g)$ be
a closed Riemannian manifold of dimension $2m$.  The \emph{$v_m$-metric} is the formal
Riemannian metric defined for $u \in \Cm$, $\ga,\gb \in T_u \Cm \cong
C^{\infty}(M)$ via
\begin{align} \label{metricdef}
\IP{\ga,\gb}_{u} =  \int_M \ga \gb v_m(g_u) dV_u.
\end{align}
Moreover, given a path $u = u(t)$ in $\Cm$ and a one-parameter family of
tangent vectors $\alpha = \alpha(t)$ with $\ga(t) \in T_{u(t)} \Cm$, let
\begin{align} \label{connectiondef}
\frac{D}{\del t} \ga := \frac{\partial}{\partial t}{\ga} - v_m^{-1} \IP{L, \N \ga
\otimes \N \frac{\partial}{\partial t}}
\end{align}
denote the directional derivative along the path $u(t)$.
\end{defn}

\begin{lemma}  \label{metcomp} The connection
defined by (\ref{connectiondef}) is metric compatible and torsion free.
\begin{proof} First we check metric compatibility. Using Theorem \ref{grahamthm} we compute
\begin{align*}
\frac{d}{dt} \IP{\ga_t, \gb_t}_{u_t} =&\ \frac{d}{dt} \int_M \ga \gb v_m(g_u)
dV_u\\
=&\ \IP{\frac{\partial}{\partial t}\alpha , \gb} + \IP{\ga,\frac{\partial}{\partial t}\beta} + \int_M \ga \gb
\N_i \left( L^{ij} \N_j \dtu \right) dV_u\\
=&\ \IP{\frac{\partial}{\partial t}\alpha , \gb} + \IP{\ga,\frac{\partial}{\partial t}\beta} - \int_M \IP{L, \left(
\ga \N \gb + \gb \N \ga \right) \otimes \N \dtu} dV_u\\
=&\ \IP{ \frac{D}{\del t} \ga,\gb} + \IP{\ga,\frac{D}{\del t} \gb}.
\end{align*}
Next, to compute the torsion, let $u = u(s,t)$ be a two-parameter family of
conformal factors.
Then
\begin{align*}
\frac{D}{\del s} \frac{\del u}{\del t} - \frac{D}{\del t} \frac{\del u}{\del
s} =&\ \frac{\del^2 u}{\del s \del t} - v_m^{-1} \IP{L, \N
\frac{\del u}{\del s} \otimes \N \frac{\del u}{\del t}} - \frac{\del^2 u}{\del s
\del t} + v_m^{-1} \IP{L, \N \frac{\del u}{\del t} \otimes \N
\frac{\del u}{\del s} }\\
=&\ 0.
\end{align*}
The lemma follows.
\end{proof}
\end{lemma}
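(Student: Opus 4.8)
The plan is to verify the two defining properties of the Levi-Civita connection — metric compatibility and vanishing torsion — directly from formula (\ref{connectiondef}), using Graham's variational formula (Theorem \ref{grahamthm}) as the only nontrivial input. For metric compatibility, I would differentiate $\IP{\ga_t,\gb_t}_{u_t}=\int_M \ga\gb\, v_m(g_u)\,dV_u$ along a path $u(t)$. The product rule produces the obvious terms $\IP{\partial_t\ga,\gb}+\IP{\ga,\partial_t\gb}$ (using that $dV_u$ is absorbed into the metric, or more precisely that the variation of $v_m\,dV_u$ is the relevant object) plus a term involving $\tfrac{d}{dt}\big(v_m(g_u)\,dV_u\big)$. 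Here I would invoke Theorem \ref{grahamthm}: this remaining term is $\int_M \ga\gb\,\N_i(L^{ij}\N_j\dtu)\,dV_u$. Integrating by parts moves the divergence onto $\ga\gb$, giving $-\int_M L^{ij}\N_i(\ga\gb)\N_j\dtu\,dV_u = -\int_M \IP{L,(\ga\N\gb+\gb\N\ga)\otimes\N\dtu}\,dV_u$, which splits precisely into the two correction terms $-v_m^{-1}\IP{L,\N\ga\otimes\N\dtu}$ paired against $\gb$ and the symmetric one paired against $\ga$. Recognizing these as $\IP{\tfrac{D}{\del t}\ga,\gb}+\IP{\ga,\tfrac{D}{\del t}\gb}$ completes the compatibility check.

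For the torsion-free property, I would introduce a two-parameter family $u(s,t)$ and compute $\tfrac{D}{\del s}\partial_t u - \tfrac{D}{\del t}\partial_s u$. Expanding each term by (\ref{connectiondef}): $\tfrac{D}{\del s}\partial_t u = \partial_s\partial_t u - v_m^{-1}\IP{L,\N\partial_s u\otimes\N\partial_t u}$ and symmetrically with $s\leftrightarrow t$. The mixed partials $\partial_s\partial_t u$ cancel, and the two correction terms cancel because the pairing $\IP{L,\N\phi\otimes\N\psi}=L^{ij}\N_i\phi\,\N_j\psi$ is symmetric in $\phi$ and $\psi$ (since $L$ is a symmetric tensor, as it arises as a linearization in divergence form). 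Hence the expression vanishes identically.

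I do not anticipate a genuine obstacle here — the argument is essentially formal once Graham's formula is in hand. The one point requiring a word of care is the integration by parts in the compatibility computation: $L$ has no boundary contribution since $M$ is closed, so $\int_M \ga\gb\,\N_i(L^{ij}\N_j\dtu)\,dV_u = -\int_M (\N_i(\ga\gb))L^{ij}\N_j\dtu\,dV_u$ with no leftover terms. A second small point is that the identification $T_u\Cm\cong C^\infty(M)$ is being used so that $\partial_t\ga$ makes literal sense as a function; this is harmless. The symmetry of $L$ needed in the torsion step is guaranteed by its description in Theorem \ref{grahamthm} (and explicitly, in the LCF case, by its identification with a Newton transformation of the Schouten tensor, which is symmetric). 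With these observations the two displayed computations in the statement's proof go through verbatim.
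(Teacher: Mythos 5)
Your proposal is correct and follows essentially the same route as the paper: differentiate the inner product, apply Graham's variational formula, integrate by parts on the closed manifold, and identify the resulting terms with the connection; then cancel the mixed partials and the symmetric correction terms for the torsion computation. Your explicit remark that the symmetry of $L$ is what makes the two correction terms cancel is a point the paper uses implicitly, but it does not constitute a different argument.
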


Next, we observe some properties of lengths of curves
and distances in the $v_m$-metric.

\begin{defn}  Given a path $u : [a,b] \to \Cm$, the
\emph{length of $u$} is
\begin{align} \label{lengthdef}
\mathcal \ell[u] := \int_a^b \IP{\ga,\gb}^{\frac{1}{2}} dt = \int_a^b \left[ \int_M
\left(
\frac{\del u}{\del t} \right)^2 v_m(g_u) dV_u \right]^{\frac{1}{2}}dt.
\end{align}
A curve is a \emph{geodesic} if it is a critical point for $\ell$.
\end{defn}

\begin{lemma} \label{geodL} A curve $u = u(t) \in \Cm$ is a geodesic if
and only if
\begin{align} \label{pathgeod}
\frac{\partial^2}{\partial t^2}u - v_m^{-1} \IP{L, \N \frac{\partial }{\partial t}u \otimes \N \frac{\partial }{\partial t}u} = 0.
\end{align}
\begin{proof} Formally, by Lemma \ref{metcomp} the connection is indeed the
Riemannian connection and so a curve is a
geodesic if and only if
\begin{align*}
0 =&\ \frac{D}{\del t} \frac{\del u}{\del t} = \frac{\partial^2 }{\partial t^2}u - v_m^{-1}
\IP{L, \N \frac{\partial }{\partial t}u \otimes \N \frac{\partial }{\partial t}u}.
\end{align*}
This can also be derived by directly taking the first variation of the length
functional.
\end{proof}
\end{lemma}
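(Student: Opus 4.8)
The statement to prove is Lemma~\ref{geodL}: a curve $u=u(t)$ in $\Cm$ is a geodesic for the length functional $\ell$ if and only if it satisfies the equation $\frac{\partial^2}{\partial t^2}u - v_m^{-1}\IP{L,\N\frac{\partial}{\partial t}u\otimes\N\frac{\partial}{\partial t}u}=0$.

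The plan is to give the proof in two ways, matching the phrasing of the statement. \textbf{First approach (via the connection).} I would invoke Lemma~\ref{metcomp}, which established that the operator $\frac{D}{\partial t}$ defined in \eqref{connectiondef} is a metric-compatible, torsion-free connection. By the uniqueness part of the fundamental theorem of Riemannian geometry (applied formally in this infinite-dimensional setting), $\frac{D}{\partial t}$ is \emph{the} Levi-Civita connection of the $v_m$-metric, so geodesics are exactly the curves with vanishing covariant acceleration: $\frac{D}{\partial t}\frac{\partial u}{\partial t}=0$. Now I simply substitute $\ga = \frac{\partial u}{\partial t}$ into the definition \eqref{connectiondef} of $\frac{D}{\partial t}$, obtaining $\frac{\partial^2}{\partial t^2}u - v_m^{-1}\IP{L,\N\frac{\partial}{\partial t}u\otimes\N\frac{\partial}{\partial t}u}$, which gives \eqref{pathgeod} directly.

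\textbf{Second approach (direct first variation).} For completeness I would also derive \eqref{pathgeod} by computing the first variation of $\ell[u]$ in \eqref{lengthdef}. Take a variation $u(t,s)$ with fixed endpoints, write $E[u] = \int_a^b\int_M (\partial_t u)^2 v_m(g_u)\,dV_u\,dt$ for the energy (after the usual reparametrization reducing the length functional to the energy functional, valid since the two have the same critical points up to reparametrization), and differentiate in $s$ at $s=0$. Differentiating the $(\partial_t u)^2$ factor produces $2\int\int (\partial_t u)(\partial_t\partial_s u) v_m\,dV_u\,dt$, while differentiating $v_m(g_u)\,dV_u$ uses Theorem~\ref{grahamthm}: $\frac{\partial}{\partial s}\big(v_m(g_u)dV_u\big)$ contributes the term $n(\partial_s u)v_m + \N_i(L^{ij}\N_j\partial_s u)$ paired against $(\partial_t u)^2$, plus the $-n(\partial_s u)$ coming from the variation of $dV_u = e^{-nu}dV_g$ which cancels the $n(\partial_s u)v_m$ piece. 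Then integrate by parts in $t$ on the first term and in the spatial variables (using the divergence structure of $L$) on the remaining term, collect the coefficient of the arbitrary variation $\partial_s u$, and set it to zero. The resulting Euler--Lagrange equation is precisely \eqref{pathgeod}; one should keep track of the factor $v_m$ that multiplies $\partial_s u$ in the pairing, which is exactly the $v_m^{-1}$ appearing in \eqref{pathgeod} after dividing through.

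The main obstacle, such as it is, lies in the bookkeeping of the second approach: one must correctly combine the variation of the measure $dV_u$, the variation of the density $v_m(g_u)$ via Theorem~\ref{grahamthm}, and the integration by parts moving derivatives off $\partial_s u$ — in particular making sure the divergence term $\N_i(L^{ij}\N_j(\cdot))$ is integrated by parts against $(\partial_t u)^2$ in the right way to produce $\IP{L,\N\partial_t u\otimes\N\partial_t u}$ rather than some other contraction. Since this is entirely formal and parallels the computations in \cite{GS_Surfaces,GS2}, I expect no genuine difficulty; the cleanest presentation is to lead with the one-line argument from Lemma~\ref{metcomp} and merely remark that the direct variation gives the same answer.
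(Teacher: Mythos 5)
Your proposal is correct and follows essentially the same route as the paper: the paper's proof is exactly your first approach (identify $\frac{D}{\del t}$ as the Levi-Civita connection via Lemma \ref{metcomp} and set $\frac{D}{\del t}\frac{\del u}{\del t}=0$), with the direct first variation of $\ell$ mentioned only as an alternative, just as you suggest presenting it.
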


\begin{rmk}  There is a canonical isometric
splitting of the tangent space at each $g_u \in \Cm$ with respect to
the $v_m$-metric.  In particular, the real line $\mathbb R \subset T_u \Cm$
given by constant functions is orthogonal to
\begin{align*}
T^0_u \Cm := \left\{ \ga\ |\ \int_M \ga v_m dV_u = 0 \right\}.
\end{align*}
\end{rmk}

\noindent In the next lemma we show two basic properties of geodesics, namely
that they preserve this isometric splitting, and are automatically parameterized
with constant speed.

\begin{lemma}  \label{metricsplit} Let $u = u(t)$ be a
solution to (\ref{pathgeod}).  Then
\begin{align*}
\frac{d}{dt} \int_M  \frac{\partial u}{\partial t}  v_m dV_u =&\ 0,\\
\frac{d}{dt} \int_M (\frac{\partial u}{\partial t})^2 v_m dV_u =&\ 0.
\end{align*}
\begin{proof} First we differentiate
\begin{align*}
\frac{d}{dt} \int_M  \frac{\partial u}{\partial t}  v_m dV_u =&\ \int_M \left(  \frac{\partial^2 u }{\partial t^2}
v_m +
\frac{\partial u}{\partial t} \N_i (L^{ij} \N_j \frac{\partial}{\partial t}u) \right) dV_u\\
=&\ \int_M \left(  \frac{\partial^2 }{\partial t^2}u - v_m^{-1} \IP{L, \N \frac{\partial u}{\partial t}
\otimes \N
\frac{\partial}{\partial t}u} \right) v_m dV_u\\
=&\ 0.
\end{align*}
Next
\begin{align*}
\frac{d}{dt} \int_M ( \frac{\partial u}{\partial t})^2 v_m dV_u =&\ \int_M \left[ 2 v_m
\frac{\partial^2 u}{\partial t^2} \frac{\partial u}{\partial t} + (\frac{\partial u}{\partial t})^2 \N_i (L^{ij} \N_j \frac{\partial u}{\partial t}) \right] dV_u\\
=&\ 2 \int_M v_m \frac{\partial u}{\partial t} \left[ \frac{\partial^2 u }{\partial t^2} -
v_m^{-1} \IP{L, \N
\frac{\partial u}{\partial t} \otimes \N \frac{\partial u}{\partial t}} \right] dV_u\\
=&\ 0.
\end{align*}
\end{proof}
\end{lemma}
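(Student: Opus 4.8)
The plan is to differentiate each of the two conserved quantities directly and use the geodesic equation (\ref{pathgeod}) together with Graham's variational formula (Theorem \ref{grahamthm}) to kill all the terms. The two identities are really just the statement that a geodesic has constant speed and that it preserves the orthogonal splitting $T_u \Cm = \mathbb{R} \oplus T^0_u \Cm$; since Lemma \ref{metcomp} tells us the connection $\frac{D}{\del t}$ is the Riemannian connection of the $v_m$-metric, both facts are formal consequences of metric compatibility, but we carry out the computation by hand.

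For the first identity, I would differentiate $\int_M \frac{\del u}{\del t} v_m\, dV_u$ under the integral sign. The variation of $v_m\, dV_u$ is governed by Theorem \ref{grahamthm}: writing $\frac{d}{dt}(v_m\, dV_u) = \N_i(L^{ij}\N_j \frac{\del u}{\del t})\, dV_u$ (the $n\dot u v_m$ term from (\ref{dsig}) cancels against the variation $\frac{d}{dt} dV_u = -n\frac{\del u}{\del t}\, dV_u$ of the volume form). This leaves
\begin{align*}
\frac{d}{dt}\int_M \frac{\del u}{\del t} v_m\, dV_u = \int_M\Big(\frac{\del^2 u}{\del t^2} v_m + \frac{\del u}{\del t}\,\N_i(L^{ij}\N_j\tfrac{\del u}{\del t})\Big) dV_u.
\end{align*}
Integrating the second term by parts moves the divergence onto $\frac{\del u}{\del t}$, producing $-\int_M \IP{L, \N\frac{\del u}{\del t}\otimes\N\frac{\del u}{\del t}}\, dV_u$, and factoring out $v_m$ yields exactly $\int_M v_m\big(\frac{\del^2 u}{\del t^2} - v_m^{-1}\IP{L,\N\frac{\del u}{\del t}\otimes\N\frac{\del u}{\del t}}\big) dV_u$, which vanishes by (\ref{pathgeod}).

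The second identity is nearly identical: differentiating $\int_M (\frac{\del u}{\del t})^2 v_m\, dV_u$ gives a term $2 v_m \frac{\del^2 u}{\del t^2}\frac{\del u}{\del t}$ from the $(\frac{\del u}{\del t})^2$ factor and a term $(\frac{\del u}{\del t})^2\,\N_i(L^{ij}\N_j\frac{\del u}{\del t})$ from varying $v_m\, dV_u$ as above. Integrating the latter by parts and using the symmetry of $L$ produces $-2\int_M \frac{\del u}{\del t}\IP{L,\N\frac{\del u}{\del t}\otimes\N\frac{\del u}{\del t}}\, dV_u$, so the whole expression collapses to $2\int_M v_m \frac{\del u}{\del t}\big(\frac{\del^2 u}{\del t^2} - v_m^{-1}\IP{L,\N\frac{\del u}{\del t}\otimes\N\frac{\del u}{\del t}}\big) dV_u = 0$ by (\ref{pathgeod}). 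The only point requiring a little care is bookkeeping the cancellation of the $n\frac{\del u}{\del t} v_m$ term with the variation of $dV_u$, and keeping track of the integration-by-parts sign and the symmetrization of $L$; there is no genuine obstacle here, as everything is formal and mirrors the computation already performed in Lemma \ref{metcomp}.
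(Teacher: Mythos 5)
Your proposal is correct and follows the same route as the paper: differentiate under the integral, use Theorem \ref{grahamthm} to compute the variation of $v_m\, dV_u$, integrate the divergence term by parts, and invoke the geodesic equation (\ref{pathgeod}) to make the integrand vanish. The explicit bookkeeping of the cancellation between the $n\dot{u}v_m$ term and the variation of $dV_u$ is a detail the paper leaves implicit, but the argument is identical.
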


One expects other formal aspects of the metric space structure established in \cite{GS2} to extend to this setting as well.  For instance, it is natural to expect nonpositive curvature of this metric.  Moreover, formal arguments suggest that the distance function induced by this Riemannian structure should be nondegenerate.  Proofs of these statements should follow along similar lines to \cite{GS2} but as we have no concrete application we do not pursue this here.

\section{The functional $F$ and geodesic convexity}

In this subsection we generalize Brendle-Viaclovsky's derivation \cite{BV} of
the conformal
primitive for the equation
\begin{align*}
v_m(g_u) = const.
\end{align*}
To begin we define a one-form $\alpha_u : T_u [g] \rightarrow \mathbb{R}$ on our given
conformal class via
\begin{align*}
\ga_u(\phi) :=&\ \int_M \phi v_m(g_u) dV_u.
\end{align*}

\begin{lemma} \label{BVlemma} The $1$-form $\ga$
is exact.
\begin{proof} Suppose that $u = u(s,t)$ is a two-parameter family of conformal
factors, and compute
\begin{align*}
\frac{d}{ds} \ga \left( \frac{\del u}{\del t} \right) =&\ \int_M \frac{\del^2
u}{\del s \del t} v_m (g_u) dV_u + \int_M \frac{\del u}{\del t}
\frac{\del}{\del s} \left( v_m(g_u) dV_u \right)\\
=&\ \int_M \frac{\del^2 u}{\del s \del t} v_m(g_u) dV_u + \int_M \frac{\del
u}{\del t} \N_i \left( L^{ij} \N_j \frac{\del u}{\del s} \right) dV_u\\
=&\ \int_M \frac{\del^2 u}{\del s \del t} v_m(g_u) dV_u - \int_M
L^{ij} \N_i \frac{\del u}{\del s} \otimes \N_j \frac{\del u}{\del t}
dV_u.
\end{align*}
This expression is manifestly symmetric in $s$ and $t$, thus $\ga$ is closed.
Since the space of conformal factors is contractible, this implies that $\ga$ is
exact.
\end{proof}
\end{lemma}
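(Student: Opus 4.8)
The plan is to prove that the $1$-form $\ga_u(\phi) = \int_M \phi\, v_m(g_u)\, dV_u$ is exact by exhibiting it as the differential of an explicit functional on the (contractible) space of conformal factors, exactly as in \cite{BV}. First I would observe that exactness for a closed $1$-form on a contractible space is automatic once closedness is established, so the substance of the proof is the symmetry computation. The key step is to take a two-parameter variation $u = u(s,t)$ and differentiate $\ga(\del u/\del t)$ with respect to $s$: the product rule gives one term involving the mixed second derivative $\del^2 u/\del s\del t$ paired against $v_m(g_u)\, dV_u$, and a second term in which the $s$-derivative hits $v_m(g_u)\, dV_u$. Here is the place where Graham's variational formula (Theorem \ref{grahamthm}) enters decisively: it tells us that $\frac{\del}{\del s}\big(v_m(g_u)\, dV_u\big) = \N_i\big(L^{ij}\N_j \tfrac{\del u}{\del s}\big)\, dV_u$, i.e.\ the variation of the measure $v_m\, dV_u$ is in divergence form with the divergence-form operator $L$ built from the linearization. (One should double-check that the $n\dot u\, v_m$ term in \eqref{dsig} combines correctly with the variation $\del_s(dV_u) = -n\,\tfrac{\del u}{\del s}\,dV_u$ of the volume form, so that the total contribution is precisely the divergence term.)

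Once that substitution is made, I would integrate by parts to move $\N_i$ off the $L^{ij}\N_j(\del u/\del s)$ factor and onto $\del u/\del t$, turning the second term into $-\int_M L^{ij}\,\N_i\tfrac{\del u}{\del s}\,\N_j\tfrac{\del u}{\del t}\, dV_u$. The resulting expression for $\frac{d}{ds}\ga(\del u/\del t)$ has two pieces: the term $\int_M \tfrac{\del^2 u}{\del s\del t}\,v_m\, dV_u$, which is manifestly symmetric in $s$ and $t$ since mixed partials commute; and the term $-\int_M L^{ij}\,\N_i\tfrac{\del u}{\del s}\,\N_j\tfrac{\del u}{\del t}\, dV_u$, which is symmetric in $s$ and $t$ provided $L^{ij}$ is a symmetric tensor. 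Symmetry of $L$ is exactly what makes the whole thing work, and it is guaranteed by Graham's construction of $L$ via the ambient metric (it is the linearization of a scalar functional, hence self-adjoint; in the LCF case it is a Newton transformation of the Schouten tensor, which is symmetric). Thus $\frac{d}{ds}\ga(\del u/\del t) = \frac{d}{dt}\ga(\del u/\del s)$, which is the coordinate expression of $d\ga = 0$.

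Finally, since the space $C^\infty(M)$ of conformal factors is a vector space, hence contractible, the Poincar\'e lemma gives a primitive $F$ with $dF = \ga$; concretely one may write $F[u] = \int_0^1 \ga_{su}(u)\, ds$ along the linear path from $0$ to $u$, which also makes transparent that the critical points of $F$ are precisely the conformal metrics with $\ga_u \equiv 0$ modulo constants, i.e.\ $v_m(g_u) = \mathrm{const}$, matching \eqref{vkY}. I expect the only genuine subtlety to be bookkeeping in the first paragraph's computation --- correctly combining the $n\dot u\, v_m$ term from \eqref{dsig} with the variation of $dV_u$ to land on the clean divergence form used in Lemma \ref{metcomp} --- rather than anything conceptually hard; the symmetry argument itself is then immediate.
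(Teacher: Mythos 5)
Your proposal is correct and follows essentially the same route as the paper: apply Graham's formula so that the $n\dot u\, v_m$ term cancels against the variation of $dV_u$, leaving the divergence term, then integrate by parts and observe that the resulting expression is symmetric in $s$ and $t$ (using symmetry of $L$), with exactness following from contractibility of the space of conformal factors. The bookkeeping point you flag is handled exactly as you describe, and your explicit primitive via the linear path is a harmless addition the paper omits.
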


\begin{prop}  \label{CYvar} Let $(M^{2m}, g)$ be a closed, even-dimensional Riemannian manifold.  Then there is a functional $F : [g] \rightarrow \mathbb{R}$ such that
if $u = u(t) : (-\epsilon , \epsilon) \rightarrow [g]$ is a path
with $u(0) = u$ and $\frac{d}{dt}u(t)|_{t=0} = \dot{u}$, then
\begin{align} \label{Lvar}
\frac{d}{dt} F[u(t)]\big|_{t=0} =&\ \int_M \dot{u} \left[ - v_m(g_u) + \bar{v}
\right] dV_u.
\end{align}
\begin{proof} Since the $1$-form $\ga$ is exact by Lemma \ref{BVlemma}, there
exists a function $E : [g] \to \mathbb R$ such that $d E = \ga$.  We thus set
$F[u] = E[u] - \frac{\bv}{n} \log \int_M dV_u$, and the result follows.
\end{proof}
\end{prop}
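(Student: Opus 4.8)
The plan is to establish the existence of $F$ by constructing a primitive for the one-form $\ga$ on the space of conformal factors, and then correcting by an explicit volume term so that the critical point equation becomes $v_m(g_u) = \bv$ rather than $v_m(g_u) = 0$. The key input is Lemma \ref{BVlemma}, which asserts $\ga$ is closed; since the space of conformal factors $C^\infty(M)$ is an affine (hence contractible) space, any closed one-form is exact, so there exists $E : [g] \to \mathbb{R}$ with $dE = \ga$. Concretely one can write $E$ as a line integral along the straight-line path $s \mapsto su$ from $0$ to $u$, namely $E[u] = \int_0^1 \ga_{su}(u)\,ds = \int_0^1 \int_M u\, v_m(g_{su})\, dV_{su}\, ds$, with closedness guaranteeing independence of the chosen path and thus that $dE = \ga$ genuinely holds.

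First I would record that for any path $u(t)$ with $u(0) = u$ and $\dot u(0) = \dot u$, the definition $dE = \ga$ gives $\frac{d}{dt}E[u(t)]|_{t=0} = \ga_u(\dot u) = \int_M \dot u\, v_m(g_u)\, dV_u$. Next I would compute the variation of the normalizing term: since $\frac{d}{dt}\log \int_M dV_{u(t)}|_{t=0} = \big(\int_M dV_u\big)^{-1}\frac{d}{dt}\int_M dV_{u(t)}|_{t=0}$, and under $g_u = e^{-2u}g$ in dimension $n = 2m$ the volume form scales as $dV_u = e^{-nu}dV_g$, one gets $\frac{d}{dt}\int_M dV_{u(t)}|_{t=0} = -n\int_M \dot u\, dV_u$. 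Hence $\frac{d}{dt}\big(\frac{\bv}{n}\log\int_M dV_{u(t)}\big)|_{t=0} = -\frac{\bv}{n}\cdot n\cdot\big(\int_M dV_u\big)^{-1}\int_M \dot u\, dV_u = -\bv\, V_u^{-1}\int_M \dot u\, dV_u$.

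Then I would observe that by Corollary \ref{confinfcor}, the total integral $v = \int_M v_m(g_u)\, dV_u$ is a conformal invariant, so its value is the fixed constant that (up to the choice of normalization of $\bv$) equals $\bv\cdot V_u^{-1}\cdot V_u$ — more precisely, defining $\bv$ to be this conformal invariant means $\bv\, V_u^{-1}\int_M \dot u\, dV_u = \int_M \dot u\, \big(V_u^{-1}\int_M v_m\, dV_u\big)\, dV_u$; this is the term that, when subtracted, yields the desired right-hand side. Setting $F[u] := E[u] - \frac{\bv}{n}\log\int_M dV_u$ and combining the two variation computations gives exactly $\frac{d}{dt}F[u(t)]|_{t=0} = \int_M \dot u\, v_m(g_u)\, dV_u - \bv\, V_u^{-1}\int_M \dot u\, dV_u = \int_M \dot u\,[-v_m(g_u) + \bv]\, dV_u$ after a sign convention on $E$ is fixed (one takes $dE = -\ga$, or equivalently replaces $E$ by its negative, to match the stated formula).

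I do not anticipate a serious obstacle here: the only subtlety is bookkeeping of signs and the precise normalization of $\bv$ relative to the conformal invariant $v$ and the volume $V_u$, together with confirming that $\bv$ as it appears is genuinely independent of the conformal representative (which is exactly Corollary \ref{confinfcor}). The conceptual content — closedness implies exactness on a contractible space, plus an explicit log-volume correction to shift the Euler-Lagrange equation — is entirely routine once Lemma \ref{BVlemma} is in hand, so the proof is short.
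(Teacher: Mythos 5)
Your proposal follows exactly the paper's route: invoke Lemma \ref{BVlemma} to get a primitive $E$ of $\ga$ on the contractible space of conformal factors, then correct by a multiple of $\log \int_M dV_u$ so that the Euler--Lagrange equation becomes $v_m(g_u)=\bar v$. The only difference is that you spell out the variation of the log-volume term and the sign convention on $E$ (which the paper's one-line formula $F[u]=E[u]-\tfrac{\bar v}{n}\log\int_M dV_u$ leaves implicit), so the argument is essentially identical.
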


\begin{prop} \label{CYgeodconv} Let $(M^{2m}, g)$ be a closed, even-dimensional Riemannian manifold.  Then $F : \Ca \rightarrow \mathbb{R}$ is geodesically convex.

\begin{proof} Let $u = u(t)$ be a geodesic.  Using Lemma \ref{metricsplit} and the inequality (A) we have
\begin{align*}
\frac{d^2}{dt^2} F[u(t)] =&\ \frac{d}{dt} \int_M \frac{\partial u}{\partial t} \left[ - v_m +
\bar{v} \right] dV_u\\
=&\  v \frac{d}{dt} \int_M  ( \frac{\partial}{\partial t}) V_u^{-1} dV_u\\
=&\ v \int_M \left[\frac{\partial^2 u}{\partial t^2} V_u^{-1}  + V_u^{-2} \frac{\partial u}{\partial t} \left(
\int_M n ( \frac{\partial u}{\partial t}) dV_u \right) - n V_u^{-1} ( \frac{\partial u}{\partial t})^2 \right] dV_u\\
=&\ v V_u^{-1} \left[ \int_M v_m^{-1} \IP{L,
\N \frac{\partial u}{\partial t} \otimes \N \frac{\partial u}{\partial t}} dV_u - n \left( \int_M ( \frac{\partial u}{\partial t})^2 dV_u - V_u^{-1}
\left( \int_M
\frac{\partial u}{\partial t} dV_u \right)^2 \right) \right]\\
\geq&\ 0.
\end{align*}
\end{proof}
\end{prop}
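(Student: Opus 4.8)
The plan is to differentiate $F$ twice along a geodesic $u = u(t)$ and recognize the resulting quantity as exactly the difference between the two sides of the Andrews-type inequality (A). First I would use Proposition \ref{CYvar} to write $\frac{d}{dt}F[u(t)] = \int_M \frac{\partial u}{\partial t}(-v_m + \bar v)\,dV_u$. The key structural observation is that, by Lemma \ref{metricsplit}, along a geodesic the quantity $\int_M \frac{\partial u}{\partial t} v_m\,dV_u$ is constant, so the $-v_m$ term contributes a multiple of the total volume invariant; more precisely, since $\bar v = v/V_u$ is itself $V_u^{-1}$ times the constant $v = \int_M v_m\,dV_u$ (a conformal invariant by Corollary \ref{confinfcor}), the integrand reorganizes so that $\frac{d}{dt}F[u(t)] = v\int_M \frac{\partial u}{\partial t} V_u^{-1}\,dV_u + (\text{const})$, where the additive constant (coming from $\int \frac{\partial u}{\partial t} v_m\,dV_u$) has zero $t$-derivative.

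Next I would differentiate once more. This requires two ingredients: the geodesic equation (\ref{pathgeod}), which lets me replace $\frac{\partial^2 u}{\partial t^2}$ by $v_m^{-1}\IP{L, \N\frac{\partial u}{\partial t}\otimes\N\frac{\partial u}{\partial t}}$, and the first-variation formula for the volume form, $\frac{\partial}{\partial t}dV_u = -n\frac{\partial u}{\partial t}\,dV_u$, which governs the evolution of both $V_u^{-1}$ and the measure in the integral. Carrying out the Leibniz rule on $v\frac{d}{dt}\int_M \frac{\partial u}{\partial t} V_u^{-1}\,dV_u$ produces three terms: one from $\frac{\partial^2 u}{\partial t^2}$, one from $\frac{d}{dt}V_u^{-1} = n V_u^{-2}\int_M\frac{\partial u}{\partial t}\,dV_u$, and one from $\frac{\partial}{\partial t}dV_u$. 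After substituting the geodesic equation and integrating the Hamiltonian term by parts to convert $\int_M v_m^{-1}\IP{L,\N\frac{\partial u}{\partial t}\otimes\N\frac{\partial u}{\partial t}}\,dV_u$ into the form appearing on the right side of (A), the expression becomes
\begin{align*}
\frac{d^2}{dt^2}F[u(t)] = v V_u^{-1}\left[\int_M v_m^{-1}\IP{L,\N\tfrac{\partial u}{\partial t}\otimes\N\tfrac{\partial u}{\partial t}}\,dV_u - n\left(\int_M(\tfrac{\partial u}{\partial t})^2\,dV_u - V_u^{-1}\Big(\int_M\tfrac{\partial u}{\partial t}\,dV_u\Big)^2\right)\right].
\end{align*}
Since $g_u \in \Ca$ by hypothesis, inequality (A) applied with $\phi = \frac{\partial u}{\partial t}$ shows the bracket is nonnegative, and $v > 0$ because $v_m(g_u) > 0$ on $\Cm$, so $\frac{d^2}{dt^2}F[u(t)] \geq 0$.

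The main obstacle is bookkeeping rather than conceptual: one must be careful that \emph{all} variations of $V_u$, of the volume form $dV_u$, and of $v_m$ itself are correctly accounted for when differentiating the second time, and that the cross terms assemble precisely into the $V_u^{-1}(\int \frac{\partial u}{\partial t}\,dV_u)^2$ correction appearing in (A) rather than some other combination. The role of Lemma \ref{metricsplit} is exactly to guarantee that no extra terms survive from differentiating $\int_M\frac{\partial u}{\partial t} v_m\,dV_u$, and the role of requiring $g_u \in \Ca$ (rather than merely $g_u \in \Cm$) is that this is precisely the set on which (A) is postulated to hold — without it the sign of the bracket is not controlled. I would also note that one should really verify that along a geodesic staying in $\Ca$ the quantity $\bar v$ is genuinely $v/V_u$ with $v$ constant; this is immediate from Corollary \ref{confinfcor}.
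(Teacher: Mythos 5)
Your proposal is correct and follows essentially the same route as the paper: reduce $\frac{d}{dt}F$ to $v\int_M \frac{\partial u}{\partial t}V_u^{-1}\,dV_u$ via Lemma \ref{metricsplit} and the conformal invariance of $v$, differentiate using the geodesic equation and the variation of the volume form, and apply (A) with $\phi = \frac{\partial u}{\partial t}$. (The only nitpick: no integration by parts is actually needed at the last step, since substituting the geodesic equation already puts the term in the form of the right-hand side of (A).)
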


\begin{proof}[Proof of Theorem \ref{functionalprops}]
Part $(i)$ follows from Proposition \ref{CYvar}, while part $(ii)$ follows from Proposition \ref{CYgeodconv}.
\end{proof}

\section{The locally conformally flat case} \label{poincare}

As we pointed out in the Introduction, it follows from Kuiper's Theorem and the work of Guan-Viaclovky \cite{GuanVia} that if $(M^{2m}, g)$ is a closed, even-dimensional LCF manifold with
\begin{align*}
\Gamma_m^{+}([g]) = \{ g_u = e^{-2u}g\ :\ A_u \in \Gamma_m^{+} \} \neq \emptyset,
\end{align*}
then $(M^{2m}, g)$ must be conformally equivalent to the round sphere or real projective space.  In this section we show that for the round sphere $(S^{2m},g_0)$ the cone $\Gamma_m^{+}([g])$ is contained in $\Ca$; i.e., the Andrews-type inequality holds.  As an immediate consequence, $F : \Gamma_m^{+}([g_0]) \rightarrow \mathbb{R}$ is geodesically
convex.  It was shown by Viaclovsky \cite{JeffAMS} that all solutions of $\sigma_m(A_u) = const.$ are given by conformal metrics with $g_u = \varphi^{\ast}g_0$ for some conformal transformation $\varphi : S^{2m} \rightarrow S^{2m}$, therefore giving us a complete variational description in this case.

The proof is an application of a closely related inequality of Andrews \cite{Andrews},
exploiting certain inequalities relating elementary symmetric polynomials.  We
begin with the inequality of Andrews:

\begin{prop} \label{andrewsineq} (Andrews \cite{Andrews}, cf. \cite{CLN} pg.
517) Let $(M^n, g)$ be a closed
Riemannian manifold with
positive Ricci curvature.  Given $\phi \in C^{\infty}(M)$ such that $\int_M \phi
dV = 0$, then
\begin{align*}
\frac{n}{n-1} \int_M \phi^2 dV \leq&\ \int_M \left( \Rc^{-1} \right)^{ij} \N_i
\phi
\N_j \phi dV,
\end{align*}
with equality if and only if $\phi \equiv 0$ or $(M^n, g)$ is isometric to the
round sphere.
\end{prop}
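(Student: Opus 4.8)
The final statement to prove is Proposition~\ref{andrewsineq}, the sharp Poincaré-type inequality of Andrews: on a closed Riemannian manifold $(M^n,g)$ with positive Ricci curvature, for $\phi$ with $\int_M \phi\, dV = 0$ one has
\begin{align*}
\frac{n}{n-1}\int_M \phi^2\, dV \leq \int_M (\Rc^{-1})^{ij}\N_i\phi\,\N_j\phi\, dV,
\end{align*}
with equality iff $\phi\equiv 0$ or $(M^n,g)$ is the round sphere.

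\textbf{Plan of proof.} The approach I would take follows Andrews' original argument (and the presentation in \cite{CLN}): it is a Reilly-type / Bochner integration by parts applied not to $\phi$ directly but to the solution of an auxiliary elliptic problem adapted to the Ricci tensor. Concretely, first I would solve the elliptic equation $\N_i\big((\Rc^{-1})^{ij}\N_j w\big) = \phi$ on $M$; since the right-hand side has zero mean and the operator $L_{\Rc}w := \divg(\Rc^{-1}\N w)$ is a divergence-form elliptic operator (ellipticity and positivity of $\Rc^{-1}$ are guaranteed by the positive Ricci hypothesis), a solution $w\in C^\infty(M)$ exists and is unique up to an additive constant. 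Then I would write $\int_M \phi^2\, dV = \int_M \phi\, L_{\Rc}w\, dV = -\int_M (\Rc^{-1})^{ij}\N_i\phi\,\N_j w\, dV$, and the game becomes estimating the cross term against the two ``diagonal'' quantities $\int(\Rc^{-1})^{ij}\N_i\phi\N_j\phi$ and $\int(\Rc^{-1})^{ij}\N_i w \N_j w$ in a way that produces the constant $\frac{n}{n-1}$.

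\textbf{Key steps, in order.} (1) Solve $L_{\Rc}w = \phi$; record $\int\phi^2 = -\int(\Rc^{-1})^{ij}\N_i\phi\N_j w$. (2) Apply the Bochner formula to $w$: integrating $\frac12\gD|\N w|^2 = |\N^2 w|^2 + \IP{\N w,\N\gD w} + \Rc(\N w,\N w)$ over $M$ gives $0 = \int_M \big(|\N^2 w|^2 + \IP{\N w,\N\gD w} + \Rc(\N w,\N w)\big)\, dV$. The crucial algebraic point is to handle the term $\int_M |\N^2 w|^2$ together with $\int_M \IP{\N w, \N \gD w} = -\int_M (\gD w)^2$: using the pointwise inequality $|\N^2 w|^2 \geq \frac{1}{n}(\gD w)^2$ one gets the ``wrong'' constant, so instead one must cleverly complete the square incorporating the relation between $\gD w$ and $\phi = \N_i((\Rc^{-1})^{ij}\N_j w)$ — i.e. one expands $\Rc^{ij}\big(\N^2 w - \tfrac{1}{n}(\gD w) g\big)_{ij}$ type expressions, or more precisely uses that $\phi = \divg(\Rc^{-1}\N w)$ expands as $(\Rc^{-1})^{ij}\N^2_{ij}w + (\N_i(\Rc^{-1})^{ij})\N_j w$, feeding the traced-Ricci weighting. (3) Combine the Bochner identity with the Cauchy–Schwarz estimate $\big(\int(\Rc^{-1})^{ij}\N_i\phi\N_j w\big)^2 \leq \int(\Rc^{-1})^{ij}\N_i\phi\N_j\phi \cdot \int(\Rc^{-1})^{ij}\N_i w\N_j w$, and bound $\int(\Rc^{-1})^{ij}\N_i w\N_j w$ by $\int\phi^2$ times $\frac{n-1}{n}$ using the Bochner output; chaining these yields $\frac{n}{n-1}\int\phi^2 \leq \int(\Rc^{-1})^{ij}\N_i\phi\N_j\phi$. (4) For the equality case, trace back: equality in Cauchy–Schwarz forces $\N\phi$ and $\N w$ to be proportional, and equality in the Bochner step forces $\N^2 w = \mu g$ for a function $\mu$ (a Hessian-proportional-to-metric condition), which by Obata's theorem (given $\Rc>0$, hence compactness and a lower Ricci bound after normalization) forces $(M^n,g)$ to be a round sphere, with $w$ — hence $\phi$ — a first-order spherical harmonic; or else $\N w \equiv 0$, giving $\phi\equiv 0$.

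\textbf{Main obstacle.} The delicate point is Step~(2)–(3): obtaining precisely the sharp constant $\frac{n}{n-1}$ rather than $\frac{n}{n}=1$. The naive trace inequality $|\N^2 w|^2\ge \frac1n(\gD w)^2$ is lossy here; one has to track the Ricci-weighting carefully, using that the auxiliary equation involves $\Rc^{-1}$ precisely so that, after the Bochner integration, the ``bad'' Hessian term recombines with the Ricci term $\int\Rc(\N w,\N w)$ to leave exactly the right coefficient. Getting this bookkeeping right — and simultaneously setting it up so the equality discussion cleanly triggers Obata's rigidity — is the heart of the argument; everything else (existence for the auxiliary PDE, Cauchy–Schwarz, the concluding arithmetic) is routine. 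Since this is Andrews' theorem, I would ultimately just cite \cite{Andrews} (cf. \cite{CLN} pg.~517) for the full details rather than reproduce the computation, as the paper does.
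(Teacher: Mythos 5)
First, note that the paper itself offers no proof of this proposition: it is quoted verbatim from Andrews (unpublished) with a pointer to \cite{CLN} p.~517, so your closing decision to cite rather than reproduce the argument matches what the paper actually does. However, since you committed to a sketch, it should be said that the sketch as written contains a genuine flaw in the setup, and it is worth being precise because the fix is simpler than the workaround you anticipate.

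The problem is your choice of auxiliary equation. If you solve $\N_i\big((\Rc^{-1})^{ij}\N_j w\big)=\phi$ and set $X=\Rc^{-1}\N w$, then the quantity you must bound after Cauchy--Schwarz is $\int \Rc^{-1}(\N w,\N w)\,dV=\int \Rc(X,X)\,dV$ with $\phi=\divg X$. The Reilly/Bochner identity for a vector field gives $\int \Rc(X,X)\,dV=\int (\divg X)^2\,dV-\int \N_iX_j\N^jX^i\,dV$, and $\N X$ is \emph{not} symmetric for this $X$ (the 1-form $\Rc(X,\cdot)=dw$ is exact, but $X$ itself is not a gradient), so $\N_iX_j\N^jX^i=|\mathrm{sym}\,\N X|^2-|\mathrm{antisym}\,\N X|^2$ carries an uncontrolled negative term and the trace inequality does not close the argument. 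The correct setup (this is what is on p.~517 of \cite{CLN}) is to solve the \emph{plain} Poisson equation $\gD w=\phi$, write $\int\phi^2\,dV=-\int\IP{\N\phi,\N w}\,dV$, and split the weight across the two factors in the pointwise Cauchy--Schwarz inequality: $\IP{\N\phi,\N w}^2\le \Rc^{-1}(\N\phi,\N\phi)\,\Rc(\N w,\N w)$. Then the Bochner formula applied to the genuine Hessian of $w$, $0=\int\big(|\N^2w|^2-(\gD w)^2+\Rc(\N w,\N w)\big)dV$, together with the ``naive'' trace inequality $|\N^2w|^2\ge\frac1n(\gD w)^2$, yields exactly $\int\Rc(\N w,\N w)\,dV\le\frac{n-1}{n}\int\phi^2\,dV$ and hence the sharp constant $\frac{n}{n-1}$. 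So your ``main obstacle'' is misdiagnosed: with the right auxiliary equation the trace inequality is not lossy at all, and no clever completion of the square is needed. Your treatment of the equality case (equality in the trace inequality forces $\N^2w=\frac{\gD w}{n}g$, then Obata's theorem) is correct in spirit and goes through verbatim once the auxiliary equation is corrected.
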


To show how this inequality implies (A) for metrics in $\Gamma_m^{+}([g])$, we use an argument that was shown to us by Petrov
\cite{Petrov}.  Given $\gl_1,\dots,\gl_n \in \mathbb R$, let
\begin{align*}
\gs_k (\gl) =&\ \sum_{1 \leq i_1 < i_2 < \dots < i_k \leq n} \gl_{i_1}\dots
\gl_{i_k},\\
\gs_{k;i}(\gl) =&\ \gs_k(\gl)|_{\gl_{i} = 0},\\
\til{\gs}_k(\gl) =&\ {n \choose k}^{-1} \gs_k(\gl_1,\dots,\gl_n).
\end{align*}

\begin{lemma} (cf. \cite{LinT}) With the notation above one has
\begin{align} \label{elemineq}
\gs_k(\gl) =&\ \gs_{k;i}(\gl) + \gl_i \gs_{k-1;i}(\gl)\\
\left[ {n \choose k}^{-1} \gs_k \right]^{\frac{1}{k}} \leq&\ \left[ {n \choose
l}^{-1} \gs_l \right]^{\frac{1}{l}}, \qquad k \geq l \geq 1 \label{maclaurin}
\end{align}
\end{lemma}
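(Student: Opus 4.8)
The identity (\ref{elemineq}) is immediate from the definition: partition the collection of $k$-element subsets of $\{1,\dots,n\}$ according to whether or not they contain the index $i$. The subsets not containing $i$ contribute precisely $\gs_k(\gl)$ evaluated at $\gl_i = 0$, i.e.\ $\gs_{k;i}(\gl)$; each subset containing $i$ contributes $\gl_i$ times a product of $k-1$ of the remaining coordinates, and summing these gives $\gl_i\,\gs_{k-1;i}(\gl)$. Equivalently, $\gs_k$ is affine in the single variable $\gl_i$, so $\gs_k(\gl) = \gs_k(\gl)|_{\gl_i = 0} + \gl_i\, \dd{\gl_i}\gs_k(\gl)$, and $\dd{\gl_i}\gs_k = \gs_{k-1;i}$.

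Inequality (\ref{maclaurin}) is Maclaurin's inequality, which we record in the form in which it is used, namely for $\gl$ in the positive cone $\gG_k^+ = \{\gs_1(\gl) > 0,\dots,\gs_k(\gl) > 0\}$ (this is the relevant case, since it will be applied to the eigenvalues of a Schouten tensor $A_u \in \gG_m^+$). The plan is to deduce it from Newton's inequalities
\begin{align*}
\til{\gs}_j(\gl)^2 \geq \til{\gs}_{j-1}(\gl)\,\til{\gs}_{j+1}(\gl), \qquad 1 \leq j \leq n-1,
\end{align*}
which hold for all real $\gl$. To prove these, one uses that $P(x) = \prod_i (x + \gl_i) = \sum_j \gs_j(\gl)\, x^{n-j}$ has only real roots; differentiating $j-1$ times, reversing the resulting polynomial, and differentiating $n-j-1$ more times — each operation preserving real-rootedness by Rolle's theorem (and by a routine perturbation argument if some $\gl_i$ vanishes) — produces a real-rooted quadratic whose coefficients are proportional to $\til{\gs}_{j-1}, \til{\gs}_j, \til{\gs}_{j+1}$, so that its nonnegative discriminant yields Newton's inequality. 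Granting these, on $\gG_k^+$ all of $\til{\gs}_0 = 1, \til{\gs}_1,\dots,\til{\gs}_k$ are positive, so the ratios $r_j := \til{\gs}_j / \til{\gs}_{j-1}$ satisfy $r_1 \geq r_2 \geq \cdots \geq r_k$; since $\til{\gs}_j^{1/j} = (r_1 \cdots r_j)^{1/j}$ is the geometric mean of $r_1,\dots,r_j$, it is nonincreasing in $j$, which is exactly (\ref{maclaurin}).

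Neither part presents a genuine obstacle — both are classical (see, e.g., \cite{LinT})  — and the only point requiring care is that (\ref{maclaurin}) fails for arbitrary real $\gl$ (e.g.\ $n = 2$, $\gl = (1,-1)$ makes $\gs_2 < 0$), so it must be invoked only on the Gå\-rding cone; this causes no trouble here, as it is applied to the eigenvalues of a metric lying in $\gG_m^+$.
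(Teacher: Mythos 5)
Your proof is correct and complete. For comparison: the paper offers no proof at all of this lemma --- it is stated with a bare citation to Lin--Trudinger \cite{LinT} --- so you have supplied an argument where the authors chose to defer to the literature. Both halves of your argument are the standard ones: the decomposition of $k$-subsets according to whether they contain $i$ (equivalently, affineness of $\gs_k$ in $\gl_i$) for (\ref{elemineq}), and Newton's inequalities via repeated differentiation, reversal, and Rolle's theorem, followed by the monotonicity of the geometric means of the ratios $r_j = \til{\gs}_j/\til{\gs}_{j-1}$, for (\ref{maclaurin}). Your one substantive observation --- that (\ref{maclaurin}) is false for arbitrary real $\gl$ and must be restricted to the G\aa rding cone (or, as in the paper's actual application inside Proposition \ref{crooshineq}, to the case where all the roots $b_i$ are positive) --- is a genuine and worthwhile correction to the statement as written, since the lemma carries no positivity hypothesis and the left-hand side of (\ref{maclaurin}) need not even be real otherwise. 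This matches how the inequality is invoked in the paper, so nothing downstream is affected.
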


\begin{lemma} \label{vieta} Given $\gl_1, \dots, \gl_n \in \mathbb R$, let $F(x)
= \prod_{i=1}^n (x - \gl_i)$, and suppose $F'(x) = n \prod_{i=1}^{n-1} (x -
\mu_i)$.  Then
\begin{align*}
\til{\gs}_k(\gl_1,\dots,\gl_n) = \til{\gs}_k(\mu_1,\dots,\mu_{n-1})
\end{align*}
\begin{proof} This follows directly from the classical Vieta formulas.
\end{proof}
\end{lemma}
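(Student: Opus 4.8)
The plan is to turn the phrase ``classical Vieta formulas'' into an explicit coefficient comparison between $F$ and $F'$. First I would expand $F$ in terms of the elementary symmetric functions of its roots,
\begin{align*}
F(x) = \prod_{i=1}^n (x-\gl_i) = \sum_{k=0}^n (-1)^k \gs_k(\gl_1,\dots,\gl_n)\, x^{n-k},
\end{align*}
so that differentiating term by term gives
\begin{align*}
F'(x) = \sum_{k=0}^{n-1} (-1)^k (n-k)\, \gs_k(\gl_1,\dots,\gl_n)\, x^{n-1-k}.
\end{align*}
On the other hand the hypothesis $F'(x) = n \prod_{i=1}^{n-1}(x-\mu_i)$ expands as $F'(x) = n \sum_{k=0}^{n-1} (-1)^k \gs_k(\mu_1,\dots,\mu_{n-1})\, x^{n-1-k}$. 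Comparing the coefficient of $x^{n-1-k}$ in these two formulas yields, for $0 \le k \le n-1$,
\begin{align*}
\gs_k(\mu_1,\dots,\mu_{n-1}) = \frac{n-k}{n}\, \gs_k(\gl_1,\dots,\gl_n).
\end{align*}

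The second and final step is elementary binomial bookkeeping. Using the identity ${n-1 \choose k} = \frac{n-k}{n}{n \choose k}$, which is immediate from the factorial formula, one computes for $1 \le k \le n-1$
\begin{align*}
\til{\gs}_k(\mu_1,\dots,\mu_{n-1}) =&\ {n-1 \choose k}^{-1} \gs_k(\mu_1,\dots,\mu_{n-1}) = \frac{n}{n-k}{n \choose k}^{-1} \cdot \frac{n-k}{n}\, \gs_k(\gl_1,\dots,\gl_n)\\
=&\ {n \choose k}^{-1}\gs_k(\gl_1,\dots,\gl_n) = \til{\gs}_k(\gl_1,\dots,\gl_n),
\end{align*}
which is precisely the assertion of the lemma.

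I do not expect any real obstacle here: the argument is a direct computation, and the only point requiring a moment's care is the range of the index (the normalized symmetric functions $\til{\gs}_k$ enter the application only for $1 \le k \le n-1$, where all the binomial coefficients above are defined and nonzero). This is exactly the content of the one-line proof via Vieta's formulas recorded above.
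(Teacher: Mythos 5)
Your proof is correct and is exactly the computation the paper is alluding to with its one-line appeal to ``the classical Vieta formulas'': expanding both sides of $F'(x)=n\prod_{i=1}^{n-1}(x-\mu_i)$ gives $\gs_k(\mu)=\tfrac{n-k}{n}\gs_k(\gl)$, and the binomial identity ${n-1\choose k}=\tfrac{n-k}{n}{n\choose k}$ converts this into equality of the normalized functions. No issues.
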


\begin{prop} \label{crooshineq} (\cite{Petrov}) Fix $n = 2m$.  Given $\lambda =
(\lambda_1, \dots, \lambda_n)$, let
\begin{align*}
A_{\lambda} =&\ \frac{1}{n-2} \left[ \lambda - \frac{\sigma_1(\lambda)}{2(n-1)}
(1,\dots,1) \right]
\end{align*}
Given $\lambda$ such that $A_{\lambda} \in \Gamma_m^{+}$, for all $i$ one has that
\begin{align} \label{f:crooshineq10}
(n-1) \sigma_m(A_{\lambda}) \leq \lambda_i \sigma_{m-1;i}(A_{\lambda}).
\end{align}
\begin{proof} Let $A_{\gl} = (a_1,\dots,a_n)$.  We fix $i=n$ for convenience, no
ordering on the $\gl_i$ is assumed.  Note that
\begin{align*}
\gl_n = (n-2) a_n + (a_1 + \dots + a_n) = (n-1) a_n + (a_1 + \dots + a_{n-1}).
\end{align*}
Applying this and (\ref{elemineq}) we see that the required inequality is
equivalent to
\begin{align*}
(n-1) \gs_m(A_{\gl}) \leq&\ \gl_n \gs_{m-1;n}(A_{\gl})\\
=&\ \left[ (n-1) a_n + \gs_1(a_1,\dots,a_{n-1}) \right]
\gs_{m-1}(a_1,\dots,a_{n-1})\\
=&\ (n-1) \left[ \gs_m(a_1,\dots,a_n) - \gs_m(a_1,\dots,a_{n-1}) \right] +
\gs_1(a_1,\dots,a_{n-1}) \gs_{m-1}(a_1,\dots,a_{n-1}),
\end{align*}
hence we see that it suffices to show that
\begin{align*}
(n-1) \gs_m(a_1,\dots,a_{n-1}) \leq \gs_1(a_1,\dots,a_{n-1})
\gs_{m-1}(a_1,\dots,a_{n-1}).
\end{align*}
Written in terms of normalized functions, since $m=\frac{n}{2}$ this is
equivalent to
\begin{align} \label{ci10}
\til{\gs}_m(a_1,\dots,a_{n-1}) \leq \til{\gs}_1(a_1,\dots,a_{n-1})
\til{\gs}_{m-1}(a_1,\dots,a_{n-1}).
\end{align}
Now let $f(x) = \prod_{i=1}^{n-1} (x - a_i)$, and let $g(x) = f^{(m-1)}(x)$.
Note that $g$ is a polynomial of degree $m$ with real roots $b_1\dots,b_m$.  By
Lemma \ref{vieta} we see that (\ref{ci10}) is equivalent to
\begin{align*}
\gs_m(b_1,\dots,b_m) =&\ \til{\gs}_m(b_1,\dots,b_m)\\
\leq&\ \til{\gs}_1(b_1,\dots,b_m) \til{\gs}_{m-1}(b_1,\dots,b_m)\\
=&\ m^{-2} \gs_1(b_1,\dots,b_m) \gs_{m-1}(b_1,\dots,b_m).
\end{align*}
If all $b_i > 0$, this follows directly from Maclaurin's inequality,
(\ref{maclaurin}).

Now suppose that $b_n \leq 0$.  Observe that $f(x)(x-a_n)$ has positive
coefficients, and $f(x)$ has only real roots. It follows from Rolle's theorem
that $(f(x)(x-a_n))^{(m)}$ has only real roots and they must be positive. Hence
$h(x) := (x-a_n)g'(x)+mg(x)$ has $m$ positive roots. Thus $(g(x) (x-a_n)^m)'$
has a root $a_n$ of multiplicity $m-1$ and $m$ positive roots.   If $b_n\leq 0$,
then by Rolle's theorem $(g(x)\cdot (x-a_n)^m)'$ has a root between $a_n$ and
$b_m$ (or has a root $a_n$ of multiplicity at least $m$ if $a_n=b_m$), which
contradicts the above discussion.  Analogously, if $a_n\geq 0$ but $b_i\leq 0$
for some $i\neq m$, we get a negative root of $(g(x)\cdot (x-a_n)^m)'$, which is
again a contradiction.  So, $a_n>0$ and $b_1,\dots,b_{m-1}>0$. Since $h(x)$ has
$m$ positive roots and positive leading coefficient, we have $h(0)(-1)^{m}>0$.
On the other hand, $g(0)(-1)^m=b_1\dots b_m<0$. Thus $(-1)^{m-1}a_n g'(0)>0$,
i.e. $\sigma_{m-1}(b_1,\dots,b_m)>0$.  It follows that $b_1+\dots+b_m>0$, thus
$(b_
1+\dots+b_m)\sigma_{m-1}(b_1,\dots,b_m)>0>m^2 b_1\dots b_m$ as desired.

\end{proof}
\end{prop}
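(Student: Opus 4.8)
The plan is to strip off the affine change of variables defining $A_{\gl}$, reduce to a pure symmetric-function inequality in $n-1$ variables, and then prove that inequality by passing to the roots of a derivative of an auxiliary polynomial. Fix $i = n$ and write $A_{\gl} = (a_1, \dots, a_n)$. From the definition of $A_{\gl}$ one has the two identities $\gl_n = (n-2)a_n + \gs_1(a) = (n-1)a_n + \gs_1(a_1, \dots, a_{n-1})$. Inserting these into the splitting formula (\ref{elemineq}), namely $\gs_m(a) = \gs_{m;n}(a) + a_n\gs_{m-1;n}(a)$, and using $\gs_{k;n}(a) = \gs_k(a_1, \dots, a_{n-1})$, the claimed inequality $(n-1)\gs_m(A_{\gl}) \leq \gl_n \gs_{m-1;n}(A_{\gl})$ collapses, after cancellation, to
\begin{align*}
(n-1)\,\gs_m(a_1, \dots, a_{n-1}) \leq \gs_1(a_1, \dots, a_{n-1})\,\gs_{m-1}(a_1, \dots, a_{n-1}).
\end{align*}
It is exactly here that $m = n/2$ is essential: after dividing by the relevant binomial coefficients, $\binom{n-1}{m} = \binom{n-1}{m-1}$ because $n - m = m$, so the inequality becomes the normalized statement $\til{\gs}_m \leq \til{\gs}_1\til{\gs}_{m-1}$ evaluated at $(a_1, \dots, a_{n-1})$.

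Next I would pass from $n-1$ variables to $m$. Put $f(x) = \prod_{i=1}^{n-1}(x - a_i)$ and $g(x) = f^{(m-1)}(x)$; then $g$ has degree $m$, and since $f$ has only real roots so does $g$, say $b_1, \dots, b_m$. Iterating Lemma \ref{vieta} gives $\til{\gs}_k(a_1, \dots, a_{n-1}) = \til{\gs}_k(b_1, \dots, b_m)$ for all $k \leq m$, so it suffices to prove $\til{\gs}_m(b) \leq \til{\gs}_1(b)\,\til{\gs}_{m-1}(b)$, i.e.\ $\gs_m(b) \leq m^{-2}\gs_1(b)\gs_{m-1}(b)$. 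If every $b_i > 0$ this follows at once from Maclaurin's inequality (\ref{maclaurin}): writing $\til{\gs}_m(b) = \big(\til{\gs}_m(b)\big)^{1/m}\big(\til{\gs}_m(b)\big)^{(m-1)/m}$ and bounding the first factor by $\til{\gs}_1(b)$ and the second by $\til{\gs}_{m-1}(b)$ yields the claim.

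The main obstacle, and the only place the hypothesis $A_{\gl} \in \Gamma_m^{+}$ actually enters, is the case in which some $b_i$ is nonpositive. Here the idea is that $A_{\gl} \in \Gamma_m^{+}$ forces the coefficients of $x^{n}, x^{n-1}, \dots, x^{n-m}$ in $f(x)(x - a_n) = \prod_{j=1}^n (x - a_j)$ to have strictly alternating signs, whence $h(x) := \big(f(x)(x-a_n)\big)^{(m)} = (x-a_n)g'(x) + m\,g(x)$ is a degree-$m$ polynomial whose roots are all real (by Rolle's theorem) and therefore all positive. One then plays a sequence of Rolle's-theorem comparisons off against $g(x)(x-a_n)^m$, whose derivative equals $(x-a_n)^{m-1}h(x)$, to conclude that a nonpositive $b_i$ can occur only when $a_n > 0$ and $b_1, \dots, b_{m-1} > 0$ with $b_m < 0$; in that configuration a direct sign count gives $\gs_{m-1}(b) > 0$ and $\gs_1(b) > 0 > \gs_m(b)$, so the desired inequality holds trivially. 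Combining the two cases completes the argument. I expect essentially all of the work to be in this last step — the careful accounting of how many real roots each auxiliary polynomial has and of their location relative to $a_n$ and the $b_i$ — while the algebraic reduction beforehand is routine.
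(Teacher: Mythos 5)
Your plan reproduces the paper's argument (due to Petrov) essentially step for step: the same reduction via $\gl_n = (n-1)a_n + \gs_1(a_1,\dots,a_{n-1})$ and the splitting identity (\ref{elemineq}), the same normalization using $\binom{n-1}{m}=\binom{n-1}{m-1}$, the same passage to the roots $b_1,\dots,b_m$ of $f^{(m-1)}$ via Lemma \ref{vieta}, Maclaurin in the all-positive case, and the same Rolle-type analysis of $h(x)=(x-a_n)g'(x)+mg(x)=\bigl(f(x)(x-a_n)\bigr)^{(m)}$ against $g(x)(x-a_n)^m$ to show that a nonpositive root can only occur in the configuration $a_n>0$, $b_1,\dots,b_{m-1}>0>b_m$.

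The one genuine gap is your last step. In that configuration the signs of $\gs_1(b)$ and $\gs_{m-1}(b)$ are \emph{not} given by a direct sign count: for $b=(1,1,-3)$ one has $\gs_1(b)=-1<0$ and $\gs_2(b)=-5<0$, so the sign pattern of the $b_i$ alone determines only $\gs_m(b)=b_1\cdots b_m<0$. You do need both positivities, since $\gs_1(b)\gs_{m-1}(b)\geq m^2\gs_m(b)$ is only ``trivial'' once the left-hand side is known to be nonnegative (if $\gs_1$ and $\gs_{m-1}$ had opposite signs a genuine quantitative comparison would be required). The missing ingredient is an evaluation at $x=0$: since $h$ has $m$ positive roots and positive leading coefficient, $(-1)^m h(0)>0$, while $(-1)^m g(0)$ has the sign of $b_1\cdots b_m<0$; feeding these into $h(0)=-a_n g'(0)+mg(0)$ gives $(-1)^{m-1}a_n g'(0)>0$, which (using $a_n>0$) is exactly $\gs_{m-1}(b)>0$, and from this $\gs_1(b)>0$ follows. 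Without this additional use of the structure of $h$ at the origin, the final step of your plan does not close.
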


\begin{prop} \label{andrewstype} Let $(M^{2m}, g)$ be a closed, even-dimensional Riemannian manifold
such that $A_g \in \gG_m^+$.  Given $\phi \in C^{\infty}(M)$ one has
\begin{align} \label{weightedpoincare}
n \left[
\int_M \phi^2 dV_g - V_g^{-1} \left( \int_M \phi
dV_g \right)^2 \right]\leq \int_M \frac{1}{\gs_{m} (A_g)} T_{m-1} (A_g)^{ij}
\N_i \phi \N_j \phi dV_g.
\end{align}
\begin{proof} Choosing a point $p \in M$ choose a basis for $T_p M$ such that the Ricci tensor is diagonalized, with eigenvalues $\gl_i$.  Since, in the notation of Proposition \ref{crooshineq}, the matrix $A_{\gl}$ corresponds to the Schouten tensor, which is also diagonalized, the inequality (\ref{f:crooshineq10}) can be rearranged to imply the matrix inequality
\begin{align*}
(n-1) \Rc^{-1} \leq \frac{1}{\gs_m(A_g)} T_{m-1}(A_g).
\end{align*}
The proposition thus follows from Proposition \ref{andrewsineq}.
\end{proof}
\end{prop}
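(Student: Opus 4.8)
The plan is to reduce the desired weighted Poincar\'e inequality to the Andrews inequality (Proposition \ref{andrewsineq}) by establishing, at each point $p \in M$, the matrix inequality
\begin{align*}
(n-1)\, \Rc^{-1} \leq \frac{1}{\gs_m(A_g)}\, T_{m-1}(A_g).
\end{align*}
Once this pointwise inequality is known, one contracts both sides against $\N\phi \otimes \N\phi$ and integrates over $M$; combined with Proposition \ref{andrewsineq} applied to $\phi - V_g^{-1}\int_M \phi\,dV_g$ (which has zero mean), this immediately yields (\ref{weightedpoincare}), since $\frac{n}{n-1}\int(\phi - \bar\phi)^2 = \frac{n}{n-1}\big[\int\phi^2 - V_g^{-1}(\int\phi)^2\big]$. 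Note that the hypothesis $A_g \in \gG_m^+$ guarantees via Guan--Viaclovsky that $\Rc > 0$, so Proposition \ref{andrewsineq} applies and all the quantities involved are well-defined.

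The substantive step is the pointwise matrix inequality. Here I would argue as follows: fix $p$ and diagonalize $\Rc$ in an orthonormal basis of $T_pM$, with eigenvalues $\gl_1, \dots, \gl_n > 0$. Since the Schouten tensor $A_g$ is an explicit linear combination of $\Rc$ and $R\,g$ (see (\ref{Adef})), it is simultaneously diagonalized in this basis, and its eigenvalues are precisely the $a_i$ from the vector $A_\gl$ in Proposition \ref{crooshineq}, with $\gl = (\gl_1,\dots,\gl_n)$. Both $\Rc^{-1}$ and $T_{m-1}(A_g)$ are then diagonal in this basis, with $i$-th entries $\gl_i^{-1}$ and $\gs_{m-1;i}(A_\gl)$ respectively (the latter being the standard formula for the diagonal entries of a Newton transformation). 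Thus the matrix inequality is equivalent to the scalar inequalities
\begin{align*}
(n-1)\frac{1}{\gl_i} \leq \frac{1}{\gs_m(A_\gl)}\, \gs_{m-1;i}(A_\gl) \qquad \text{for all } i,
\end{align*}
i.e. $(n-1)\gs_m(A_\gl) \leq \gl_i\, \gs_{m-1;i}(A_\gl)$, which is exactly (\ref{f:crooshineq10}). So Proposition \ref{crooshineq} gives precisely what is needed.

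The main obstacle in this argument is \emph{not} in Proposition \ref{andrewstype} itself — given the inputs it is essentially a bookkeeping exercise — but rather lies upstream in Proposition \ref{crooshineq}, whose proof is the genuinely delicate part: one must track the roots of successive derivatives of $f(x)(x-a_n)$ and apply Rolle's theorem carefully to rule out non-positive roots, and the passage from the general case to the case where all relevant $b_i > 0$ (where Maclaurin's inequality finishes the job) requires the sign analysis carried out there. Within the proof of Proposition \ref{andrewstype} as such, the only point demanding a line or two of care is verifying that the $i$-th diagonal entry of the Newton transformation $T_{m-1}(A_g)$ is indeed $\gs_{m-1;i}(A_\gl)$ — this is the standard fact that, for a diagonal endomorphism with entries $a_1,\dots,a_n$, the Newton transformation $T_k$ is diagonal with entries $\partial \gs_{k+1}/\partial a_i = \gs_{k;i}$ — after which everything reduces transparently to Proposition \ref{crooshineq} and Proposition \ref{andrewsineq}.
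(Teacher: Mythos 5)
Your proposal is correct and follows essentially the same route as the paper: diagonalize $\Rc$ (hence $A_g$ and $T_{m-1}(A_g)$) at a point, recognize (\ref{f:crooshineq10}) as the entrywise form of the matrix inequality $(n-1)\Rc^{-1} \leq \gs_m(A_g)^{-1} T_{m-1}(A_g)$, and conclude via Proposition \ref{andrewsineq} applied to $\phi$ minus its mean. The extra details you supply (the diagonal entries of the Newton transformation, positivity of $\Rc$ from Guan--Viaclovsky) are correct and merely make explicit what the paper leaves implicit.
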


\begin{proof}[Proof of Theorem \ref{LCF}]  In the LCF setting it was shown by Graham-Juhl \cite{GrahamJuhl} that $\sigma_m(A_g) = v_m(g)$ and consequently $T_{m-1}(A) = L$.  Therefore, Theorem \ref{LCF} follows from Proposition \ref{andrewstype} and Theorem \ref{functionalprops}.
\end{proof}

%
%
%
%
\section{The  gradient flow of the functional $F$}
\label{gradflowsec}

In this section we derive the $v_m$-metric gradient flow of the $F$ functional
and many formal properties of it related to the $v_m$-metric, culminating
in
the proof of Theorem \ref{iskflowprops}.

\begin{lemma}  With respect to the
$v_m$-metric, one has
\begin{align*}
\left[\grad F \right]_u  =&\  -1 + \frac{ \bar{v}}{v_m}.
\end{align*}
\begin{proof} Using Proposition \ref{CYvar} and the definition of the $\sigma_k$
metric we have
\begin{align*}
\frac{d}{dt} F[u] =&\ \int_M \frac{\partial}{\partial t}u \left[ - v_m(g_u) + \bar{v}
\right] dV_u\\
=&\ \int_M \frac{\partial}{\partial t}u \left[ - 1 + \frac{ \bar{v}}{v_m (A_u)} \right]
v_m(g_u) dV_u\\
=&\ \IP{ \frac{\partial}{\partial t}u, -1 + \frac{ \bar{v}}{v_m(g_u)}}_u.
\end{align*}
\end{proof}
\end{lemma}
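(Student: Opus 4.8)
The plan is to read off the gradient directly from the first variation formula for $F$ established in Proposition \ref{CYvar}, rewriting that formula as a pairing in the $v_m$-metric. Recall that the gradient of $F$ at $u$ is the unique element $[\grad F]_u \in T_u \Cm \cong C^\infty(M)$ characterized by the identity $dF_u(\phi) = \IP{[\grad F]_u, \phi}_u$ for all $\phi \in T_u \Cm$. So the task is purely to massage the right-hand side of \eqref{Lvar} into the form $\IP{\phi, \cdot}_u$.

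First I would take an arbitrary path $u = u(t)$ through $u$ with $\frac{\partial}{\partial t} u|_{t=0} = \phi$ and invoke Proposition \ref{CYvar} to write $\frac{d}{dt} F[u(t)] = \int_M \phi \left( - v_m(g_u) + \bar v \right) dV_u$. The key algebraic step is then to factor $v_m(g_u)$ out of the integrand: since $u \in \Cm$ we have $v_m(g_u) > 0$ pointwise, so division is legitimate, and
\begin{align*}
\int_M \phi \left( - v_m(g_u) + \bar v \right) dV_u = \int_M \phi \left( -1 + \frac{\bar v}{v_m(g_u)} \right) v_m(g_u)\, dV_u = \IP{\phi,\ -1 + \frac{\bar v}{v_m(g_u)}}_u,
\end{align*}
where the last equality is just the definition \eqref{metricdef} of the $v_m$-metric. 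Since $\phi$ was arbitrary, uniqueness of the metric dual forces $[\grad F]_u = -1 + \frac{\bar v}{v_m}$, as claimed.

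There is essentially no obstacle here: the content of the lemma is entirely contained in Proposition \ref{CYvar} together with the positivity of $v_m$ on $\Cm$, which is precisely what makes the $v_m$-metric nondegenerate and hence what makes the gradient well-defined. The only point worth stating explicitly is that the computation takes place on $\Cm$ so that the factor $v_m(g_u)^{-1}$ appearing in the answer is finite and smooth; no further analytic input is needed.
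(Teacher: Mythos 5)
Your proposal is correct and follows exactly the paper's own argument: invoke Proposition \ref{CYvar}, factor $v_m(g_u)$ out of the integrand, and recognize the result as the $v_m$-metric pairing. The only addition is your explicit remark that positivity of $v_m$ on $\Cm$ justifies the division and the nondegeneracy of the metric, which the paper leaves implicit.
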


\begin{defn}  We say that the path
$u = u(t)$ of conformal factors is a
solution to \emph{inverse $v_m$-flow} if $g_u = e^{-2u}g \in \Ca$
\begin{align*}
\dt u =&\  - \left[\grad F \right]_u = 1 - \frac{\bar{v}}{v_m(g_u)}.
\end{align*}
\end{defn}

We now establish a number of monotonicity properties for the
inverse $v_m$-flow.  In Proposition \ref{Fflowconvexity} we show that
the $F$ functional is convex along a flow line. We then prove the monotonicity of entropy (Proposition \ref{entropymonotonicity}).
Finally, in Proposition \ref{lengthmonotonicity} we establish that the
length of curves is monotone nonincreasing when flowed along inverse $v_m$
flow.

\begin{prop}  \label{Fflowconvexity} Given $u = u(t)$
a solution to the inverse
$v_m$ flow, one has
\begin{align*}
\frac{d^2}{dt^2} F[u_t] \geq&\ 0.
\end{align*}
\begin{proof} Using (\ref{Lvar}) we have for a solution to inverse $v_m$ flow
\begin{align*}
\frac{d}{dt} F =&\ - \int_M \left( 1 - \frac{\bar{v}}{v_m} \right)^2
v_m dV_u\\
=&\ - \int_M \left[1 - \frac{2 \bar{v}}{v_m} + \frac{\bar{v}^2}{v_m^2}
\right] v_m dV_u\\
=&\ - v + 2 v - \bar{v}^2 \int_M \frac{1}{v_m} dV_u.
\end{align*}
Hence using the variational formula (\ref{dsig}) we have
\begin{align*}
\frac{d^2}{dt^2} F =&\ \frac{d}{dt} \left[ - \frac{v^2}{V_u^2} \int_M
\frac{1}{v_m} dV_u \right]\\
=&\ \frac{v^2}{V_u^3} \left[ 2 \frac{d}{dt} V_u \int_M \frac{1}{v_m}
dV_u - V_u \frac{d}{dt} \int_M \frac{1}{v_m} dV_u \right]\\
=&\ \frac{v^2}{V_u^3} \left[ 2 \left( - n V_u + n \bar{v}
\int_M \frac{1}{v_m} dV_u \right) \int_M \frac{1}{v_m} dV_u \right.\\
&\ \qquad - V_u \int_M n \left(- 1 + \frac{\bar{v}}{v_m} \right)
\frac{1}{v_m} dV_u\\
&\ \left. \qquad - V_u \int_M \left( - \frac{n}{v_m} + \frac{n
\bar{v}}{v_m^2} + \frac{\bar{v}}{v_m^2} \N_i (L^{ij} \N_j \frac{1}{v_m}) \right)
dV_u \right]\\
=&\ \frac{v^2}{V_u^3} \left[ 2n \bar{v} \left( \int_M
\frac{1}{v_m} dV_u \right)^2 - 2n v \int_M \frac{1}{v_m^2} dV_u -
v \int_M \frac{1}{v_m^2} \N_i (L^{ij} \N_j \frac{1}{v_m})  dV_u
\right]\\
=&\ \frac{v^2}{V_u^3} \left[ 2n \bar{v} \left( \int_M
\frac{1}{v_m} dV_u \right)^2 - 2n v \int_M \frac{1}{v_m^2} dV_u + 2
v \int_M \frac{1}{v_m} \IP{L, \N \frac{1}{v_m} \otimes \N
\frac{1}{v_m}} dV_u \right]\\
=&\ \frac{2 v^3}{V_u^3} \left[ - n \int_M \left( \frac{1}{v_m} -
V_u^{-1} \int_M \frac{1}{v_m} \right)^2 + \int_M \frac{1}{v_m}
\IP{L, \N \frac{1}{v_m} \otimes \N \frac{1}{v_m}} dV_u \right]\\
\geq&\ 0,
\end{align*}
where the last line follows from applying (A).
\end{proof}
\end{prop}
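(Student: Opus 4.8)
The plan is to compute $\frac{d^{2}}{dt^{2}}F$ directly along a flow line and recognize the outcome as the Andrews-type inequality (A) applied to the test function $\phi = 1/v_m$. First I would substitute the flow equation $\frac{\partial u}{\partial t} = 1 - \bar v/v_m$ into the first-variation formula (\ref{Lvar}); since $-v_m + \bar v = -v_m\frac{\partial u}{\partial t}$, this gives
\[
\frac{d}{dt}F = -\int_M \Big(1 - \frac{\bar v}{v_m}\Big)^{2} v_m\, dV_u.
\]
Expanding the square and using that $\bar v V_u = v$ together with the conformal invariance of $v = \int_M v_m\, dV_u$ (Corollary \ref{confinfcor}), the right side collapses to $\frac{d}{dt}F = v - \bar v^{2}\int_M v_m^{-1}\, dV_u = v - \frac{v^{2}}{V_u^{2}}\int_M v_m^{-1}\, dV_u$.

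Since $v$ is constant along the flow, $\frac{d^{2}}{dt^{2}}F = -\frac{d}{dt}\big[\, v^{2}V_u^{-2}\int_M v_m^{-1}\, dV_u\,\big]$, so I would compute the two remaining ingredients: $(i)$ $\frac{d}{dt}V_u = \int_M \frac{\partial}{\partial t}(dV_u) = -n\int_M \frac{\partial u}{\partial t}\, dV_u = -nV_u + n\bar v\int_M v_m^{-1}\, dV_u$, using $\frac{\partial}{\partial t}(dV_u) = -n\frac{\partial u}{\partial t}\, dV_u$ and the flow equation; and $(ii)$ $\frac{d}{dt}\int_M v_m^{-1}\, dV_u$, evaluated via Graham's variational formula (\ref{dsig}), $\frac{\partial}{\partial t}v_m = n\frac{\partial u}{\partial t}\, v_m + \N_i(L^{ij}\N_j \frac{\partial u}{\partial t})$, then integrating the divergence term by parts onto $v_m^{-2}$ and using the identity $\N_j\frac{\partial u}{\partial t} = -\bar v\,\N_j(v_m^{-1})$, which again follows from the flow equation. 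This turns the divergence contribution into $-2\bar v\int_M v_m^{-1}\IP{L, \N\tfrac{1}{v_m}\otimes\N\tfrac{1}{v_m}}dV_u$.

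Assembling these pieces, the goal is to see every term coalesce into
\[
\frac{d^{2}}{dt^{2}}F = \frac{2v^{3}}{V_u^{3}}\left[\, -n\int_M\Big(\frac{1}{v_m} - V_u^{-1}\int_M\frac{1}{v_m}\, dV_u\Big)^{2}dV_u + \int_M\frac{1}{v_m}\IP{L, \N\tfrac{1}{v_m}\otimes\N\tfrac{1}{v_m}}dV_u\,\right],
\]
after which the Andrews-type inequality (A) — available because the flow is required to stay in $\Ca$ — applied with $\phi = 1/v_m$ gives $\frac{d^{2}}{dt^{2}}F\geq 0$ at once, using $\int_M(\phi - V_u^{-1}\int_M\phi\, dV_u)^{2}dV_u = \int_M\phi^{2}dV_u - V_u^{-1}(\int_M\phi\, dV_u)^{2}$. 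The main obstacle is purely organizational: steps $(i)$ and $(ii)$ produce several structurally similar pieces — a \emph{variance of $1/v_m$} term and a weighted Dirichlet term in $\N(1/v_m)$ — each carried by a different combination of powers of $v$, $V_u$ and $\bar v$, and one must track signs and coefficients carefully so that they condense exactly into the bracket above. Conceptually the sign is no surprise: along a gradient flow $\frac{\partial u}{\partial t} = -[\grad F]_u$ one formally has $\frac{d^{2}}{dt^{2}}F = 2\,\mathrm{Hess}\,F(\grad F,\grad F)$, which is nonnegative by the geodesic convexity of $F$ on $\Ca$ (Proposition \ref{CYgeodconv}); the computation above realizes this explicitly, and the fact that $[\grad F]_u = -1 + \bar v/v_m$ is precisely what makes $\phi = 1/v_m$ the natural test function for (A).
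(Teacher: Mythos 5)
Your proposal is correct and follows essentially the same route as the paper: substitute the flow equation into (\ref{Lvar}), reduce $\frac{d}{dt}F$ to $v - \bar v^2\int_M v_m^{-1}\,dV_u$, differentiate again using $\frac{d}{dt}V_u$ and Graham's formula (\ref{dsig}) with an integration by parts, and apply (A) to $\phi = 1/v_m$. The closing remark identifying the computation as $2\,\mathrm{Hess}\,F(\grad F,\grad F)$ is a nice conceptual gloss but does not change the argument.
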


The next result is a monotonicity result for the entropy along the flow:

\begin{prop} \label{entropymonotonicity} Given $u = u(t)$
a solution to
inverse
$v_m$-flow, one has
\begin{align*}
\frac{d}{dt} \int_M v_m \log v_m dV_u =&\ - \bar{v} \int_M
 \IP{L, \N \frac{1}{v_m} \otimes \N \frac{1}{v_m}} \leq 0.
\end{align*}
\begin{proof} Again using the variational formula for $v_m$,
\begin{align*}
\frac{d}{dt} & \int_M v_m \log v_m dV_u\\
=& \int_M \frac{\partial}{\partial t}( v_m \log v_m) dV_u + \int_M v_m \log v_m \frac{\partial}{\partial t}dV_u \\
=&\ \int_M \left[1 + \log v_m \right]
\frac{\partial}{\partial t}v_m dV_u - \int_M v_m \log v_m (n \frac{\partial}{\partial t}u) dV_u\\
=&\ \int_M \left[1 + \log v_m \right] \left[ - \N_i (L^{ij} \N_j
\frac{\bar{v}}{v_m}) - n \left[  v_m - \bar{v}_k \right] \right] + n
\int_M
v_m \log v_m \left[ 1 - \frac{\bar{v}_k}{v_m} \right]\\
=&\ - \bar{v} \int_M  \IP{L, \N \frac{1}{v_m} \otimes \N
\frac{1}{v_m}}.
\end{align*}
\end{proof}
\end{prop}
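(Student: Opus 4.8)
The plan is to differentiate the entropy $\int_M v_m \log v_m\, dV_u$ directly along the flow, using three ingredients: Graham's variational formula from Theorem \ref{grahamthm}, namely $\frac{d}{dt} v_m(g_u) = n \dot u\, v_m + \N_i(L^{ij} \N_j \dot u)$ where $\dot u = \partial u/\partial t$; the conformal variation of the volume form, $\frac{\partial}{\partial t} dV_u = - n \dot u\, dV_u$ (immediate from $dV_u = e^{-nu} dV_g$); and the flow equation $\dot u = 1 - \bar v/v_m$, which, since $\bar v$ is a spatial constant, gives $\N_j \dot u = - \bar v\, \N_j( v_m^{-1})$.

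First I would apply the product rule to write
\[
\frac{d}{dt} \int_M v_m \log v_m\, dV_u = \int_M (1 + \log v_m)\, \frac{\partial}{\partial t} v_m\, dV_u - n \int_M v_m \log v_m\, \dot u\, dV_u ,
\]
and then substitute Graham's formula for $\frac{\partial}{\partial t} v_m$. Two cancellations occur. The portion of $(1 + \log v_m) \frac{\partial}{\partial t} v_m$ coming from the pointwise term $n \dot u\, v_m$ contributes $n \int_M \log v_m\, \dot u\, v_m\, dV_u$ to the $\log v_m$ part, which exactly cancels the volume term $- n \int_M v_m \log v_m\, \dot u\, dV_u$. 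What is left from the ``$1$'' part is $\int_M \frac{\partial}{\partial t} v_m\, dV_u = n \int_M \dot u\, v_m\, dV_u = n \int_M (v_m - \bar v)\, dV_u$, and this vanishes because $\int_M v_m\, dV_u = v$ is a conformal invariant (Corollary \ref{confinfcor}) while $\bar v\, V_u = v$. Hence
\[
\frac{d}{dt} \int_M v_m \log v_m\, dV_u = \int_M \log v_m\, \N_i\big( L^{ij} \N_j \dot u \big)\, dV_u .
\]

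Next I would integrate by parts and simplify using $\N_i \log v_m = v_m^{-1} \N_i v_m = - v_m\, \N_i(v_m^{-1})$ together with $\N_j \dot u = - \bar v\, \N_j(v_m^{-1})$, which turns the right-hand side into
\[
- \int_M \N_i(\log v_m)\, L^{ij} \N_j \dot u\, dV_u = - \bar v \int_M v_m\, L^{ij} \N_i(v_m^{-1}) \N_j(v_m^{-1})\, dV_u .
\]
Since $g_u \in \Ca \subseteq \Cm$ we have $v_m > 0$ and $L > 0$, and $\bar v = v / V_u > 0$; therefore the integrand is nonnegative and the expression is $\leq 0$, which is the claim.

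The only point that needs care is the bookkeeping of the two cancellation mechanisms above --- the $\log v_m$ terms against the volume variation, and the constant term against conformal invariance --- but both are dictated by the precise divergence structure in Theorem \ref{grahamthm}, so there is no genuine analytic obstacle; the argument is essentially algebraic once Graham's formula is available. I would also note that, in contrast with Proposition \ref{Fflowconvexity}, this monotonicity does not require the Andrews-type inequality (A): positivity of $L$, which is built into the definition of $\Cm$, already forces the sign.
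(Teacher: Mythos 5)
Your argument is correct and follows essentially the same route as the paper's proof: expand the time derivative of the entropy, substitute Graham's formula (\ref{dsig}) and the flow equation, observe the two cancellations (the $\log v_m$ terms against the variation of $dV_u$, and the remaining pointwise term against the conformal invariance of $v$), and integrate the divergence term by parts. The only difference is cosmetic: your final expression $-\bar{v}\int_M v_m \IP{L, \N \frac{1}{v_m} \otimes \N \frac{1}{v_m}}\, dV_u$ carries the weight $v_m$ and the volume element explicitly, which is in fact the correct bookkeeping (the paper's displayed formula suppresses both), and your remark that only $L>0$ from the definition of $\Cm$ --- not the Andrews-type inequality (A) --- is needed for the sign is also accurate.
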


\begin{proof}[Proof of Theorem \ref{iskflowprops}]  This follows immediately from the preceding two propositions.
\end{proof}

We include a final proposition since it provides an interesting parallel with a result of Calabi-Chen \cite{CalabiChen} on the length of paths under the Calabi flow in K\"ahler geometry:

\begin{prop}  \label{lengthmonotonicity} Let
$u = u(s,t)$ be a two-parameter family of conformal factors such that $g_u = e^{-2u}g \in \Ca$ for $s \in [0,1], t \in [0,T)$.  We also assume
that for all $s \in [0,1]$, the path $t \mapsto u(\cdot,t)$ is a solution to inverse $v_m$-flow.
Then
\begin{align*}
\frac{d}{dt} \ell( u(\cdot, t)) \leq 0.
\end{align*}
\begin{proof} We directly compute
\begin{align*}
\frac{d}{dt} \ell( u(\cdot, t)) =&\ \frac{d}{dt} \int_0^1 \left[ \int_M \left(
\frac{\del u}{\del s} \right)^2 v_m(g_u) dV_u \right]^{\frac{1}{2}}ds \\
=&\ \frac{1}{2} \int_0^1 \brs{\frac{\partial u}{\partial s} }_u^{-1} \int_M \left[ 2 \frac{\partial^2 u}{\partial s \partial t} \frac{\partial u}{\partial s} v_m +
( \frac{\partial u}{\partial s})^2 \N_i (L^{ij} \N_j \frac{\partial u}{\partial t}) \right] dV_u ds.
\end{align*}
Now we compute
\begin{align*}
\frac{\partial^2 u}{\partial s \partial t} =&\ \frac{\del}{\del s} \left[ 1 - \frac{\bar{v}}{v_m} \right]\\
=&\ \bar{v} \left[ \frac{1}{V_u v_m} \frac{\del}{\del s} V_u +
\frac{1}{v_m^2} \N_i (L^{ij} \N_j \frac{\del u}{\del s}) + n \frac{\partial u}{\partial s}
v_m^{-1} \right]\\
=&\ \bar{v} \left[ - \frac{1}{V_u v_m} \int_M n \frac{\partial u}{\partial s} dV_u + \frac{1}{v_m^2} \N_i
(L^{ij} \N_j \frac{\del u}{\del s}) + n \frac{\partial u}{\partial s} v_m^{-1}
\right].
\end{align*}
Hence
\begin{align*}
\int_M & 2 \frac{\partial^2 u}{\partial s \partial t} \frac{\partial u}{\partial s}  v_m dV_u\\
=&\ 2 \bar{v} \int_M \left[ - n V_u^{-1}
v_m^{-1} \int_M \frac{\partial u}{\partial s}  dV_u +  v_m^{-2} \N_i (L^{ij} \N_j \frac{\partial u}{\partial s} ) + n v_m^{-1}
\frac{\partial u}{\partial s}  \right] \frac{\partial u}{\partial s}  v_m dV_u\\
=&\ 2n \bar{v} V_u^{-1} \left\{ V_u \int_M
(\frac{\partial u}{\partial s})^2 dV_u - \left[ \int_M \frac{\partial u}{\partial s}  dV_u \right]^2 \right\} + 2 \bar{v} \int_M
v_m^{-1} \frac{\partial u}{\partial s}  \N_i (L^{ij} \N_j \frac{\partial u}{\partial s} ) dV_u\\
=&\ 2n \bar{v} V_u^{-1} \left\{ V_u \int_M
(\frac{\partial u}{\partial s})^2 dV_u - \left[ \int_M \frac{\partial u}{\partial s}  dV_u \right]^2 \right\}\\
&\ - 2 \bar{v} \int_M \left[ \IP{L, v_m^{-1} \N \frac{\partial u}{\partial s}  \otimes
\N \frac{\partial u}{\partial s}  + \frac{\partial u}{\partial s}  \N \frac{\partial u}{\partial s}  \otimes \N v_m^{-1} } \right]dV_u.
\end{align*}
Also
\begin{align*}
\int_M (\frac{\partial u}{\partial s})^2 \N_i (L^{ij} \N_j \frac{\partial u}{\partial t} ) dV_u =&\ - 2 \int_M \frac{\partial u}{\partial s}  \IP{L, \N u_t
\otimes \N \frac{\partial u}{\partial s} } dV_u\\
=&\int_M (\frac{\partial u}{\partial s})^2 \N_i (L^{ij} \N_j ( \frac{\bv}{v_m} ) ) dV_u =&\ - 2 \int_M \frac{\partial u}{\partial s}  \IP{L, \N u_t
\otimes \N \frac{\partial u}{\partial s} } dV_u\\
=&\ 2 \bar{v} \int_M \IP{L, \frac{\partial u}{\partial s}  \N \frac{\partial u}{\partial s}  \otimes \N v_m^{-1}} dV_u.
\end{align*}
Collecting these calculations yields
\begin{align*}
\frac{d}{dt} \ell( u(\cdot, t))  =&\ - \int_0^1 \bar{v} \brs{\frac{\partial u}{\partial s} }_u^{-1} \left[
\int_M v_m^{-1} \IP{L, \N \frac{\partial u}{\partial s}  \otimes
\N \frac{\partial u}{\partial s} } dV_u - n \int_M (\frac{\partial u}{\partial s})^2 + n V_u^{-1} \left[ \int_M \frac{\partial u}{\partial s}  dV_u \right]^2
\right] ds\\
\leq&\ 0,
\end{align*}
where the last line follows from (A) (since $g_u \in \Ca$).
\end{proof}
\end{prop}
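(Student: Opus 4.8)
<br>

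The statement to prove is Proposition~\ref{lengthmonotonicity}: along a family $u(s,t)$ of conformal factors staying in $\Ca$, with each $t$-line a solution of inverse $v_m$-flow, the length $\ell(u(\cdot,t))$ of the $s$-curve is nonincreasing in $t$.

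The plan is to differentiate the length functional directly under the integral sign and reduce everything to the Andrews-type inequality (A). First I would write
\[
\frac{d}{dt}\ell(u(\cdot,t)) = \frac{1}{2}\int_0^1 \brs{\tfrac{\partial u}{\partial s}}_u^{-1}\,\frac{\partial}{\partial t}\!\left[\int_M \big(\tfrac{\partial u}{\partial s}\big)^2 v_m(g_u)\,dV_u\right]ds,
\]
so the whole problem is to show the bracketed $t$-derivative is $\leq 0$ pointwise in $s$ (the factor $\brs{\partial u/\partial s}_u^{-1}$ is nonnegative). Expanding that derivative using Theorem~\ref{grahamthm} for $\frac{\partial}{\partial t}(v_m dV_u)$ gives a mixed term $2\int_M \frac{\partial^2 u}{\partial s\partial t}\frac{\partial u}{\partial s}v_m\,dV_u$ and a curvature term $\int_M (\frac{\partial u}{\partial s})^2\nabla_i(L^{ij}\nabla_j\frac{\partial u}{\partial t})\,dV_u$.

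The key computational input is an explicit formula for $\frac{\partial^2 u}{\partial s\partial t}$, obtained by differentiating the flow equation $\frac{\partial u}{\partial t} = 1 - \bar v/v_m(g_u)$ in $s$. Since $\bar v = v/V_u$ with $v$ conformally invariant by Corollary~\ref{confinfcor}, only $V_u$ and $v_m$ depend on $s$; using $\frac{\partial}{\partial s}V_u = -n\int_M \frac{\partial u}{\partial s}\,dV_u$ and $\frac{\partial}{\partial s}v_m = \nabla_i(L^{ij}\nabla_j\frac{\partial u}{\partial s}) + n\frac{\partial u}{\partial s}v_m$ yields
\[
\frac{\partial^2 u}{\partial s\partial t} = \bar v\Big[-\tfrac{n}{V_u v_m}\!\int_M \tfrac{\partial u}{\partial s}\,dV_u + \tfrac{1}{v_m^2}\nabla_i(L^{ij}\nabla_j\tfrac{\partial u}{\partial s}) + n\tfrac{\partial u}{\partial s}v_m^{-1}\Big].
\]
Substituting this into the mixed term and integrating by parts produces the Andrews quadratic form $\int_M v_m^{-1}\IP{L,\nabla\frac{\partial u}{\partial s}\otimes\nabla\frac{\partial u}{\partial s}}dV_u$, a Poincar\'e-type combination $n(\int_M(\frac{\partial u}{\partial s})^2 dV_u - V_u^{-1}(\int_M \frac{\partial u}{\partial s}dV_u)^2)$, plus a cross term involving $\IP{L,\frac{\partial u}{\partial s}\nabla\frac{\partial u}{\partial s}\otimes\nabla v_m^{-1}}$. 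The curvature term $\int_M(\frac{\partial u}{\partial s})^2\nabla_i(L^{ij}\nabla_j\frac{\partial u}{\partial t})dV_u$, after integrating by parts and using $\frac{\partial u}{\partial t} = 1 - \bar v v_m^{-1}$ (the constant $1$ drops out under $\nabla$), contributes exactly $2\bar v\int_M \IP{L,\frac{\partial u}{\partial s}\nabla\frac{\partial u}{\partial s}\otimes\nabla v_m^{-1}}dV_u$, which cancels the cross term from the mixed term.

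What remains after all the cancellation is
\[
\frac{d}{dt}\ell(u(\cdot,t)) = -\int_0^1 \bar v\,\brs{\tfrac{\partial u}{\partial s}}_u^{-1}\Big[\int_M v_m^{-1}\IP{L,\nabla\tfrac{\partial u}{\partial s}\otimes\nabla\tfrac{\partial u}{\partial s}}dV_u - n\Big(\int_M(\tfrac{\partial u}{\partial s})^2 dV_u - V_u^{-1}\big(\int_M\tfrac{\partial u}{\partial s}dV_u\big)^2\Big)\Big]ds,
\]
and the bracket is nonnegative precisely by inequality (A), valid since $g_u \in \Ca$ for all $(s,t)$; also $\bar v = v/V_u > 0$ since $v = \int v_m dV_u > 0$ on $\Ca$. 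I expect the main obstacle to be purely organizational: tracking the several integration-by-parts terms and sign conventions carefully enough that the cross terms from the two pieces cancel exactly. There is no analytic subtlety here beyond that bookkeeping, since we work formally and assume the flow family exists and stays in $\Ca$; the one point worth double-checking is that differentiating $\bar v$ in $s$ genuinely involves only $V_u$ (not $v$), which is where Corollary~\ref{confinfcor} is used.
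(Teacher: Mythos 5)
Your proposal is correct and follows essentially the same route as the paper: differentiate the length integrand, compute $\frac{\partial^2 u}{\partial s\partial t}$ from the flow equation (using conformal invariance of $v$ so that only $V_u$ and $v_m$ vary in $s$), integrate by parts so the cross terms in $\IP{L,\frac{\partial u}{\partial s}\N\frac{\partial u}{\partial s}\otimes\N v_m^{-1}}$ cancel, and conclude with inequality (A). The final expression you arrive at matches the paper's exactly.
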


\end{document}